\newcommand{\C}{\tilde{C}}
\renewcommand{\C}{\mathcal{C}}
\renewcommand{\P}{\mathcal{P}}
\newcommand{\U}{\mathcal{U}}
\newcommand{\R}{{\mathcal{R}}}
\newcommand{\Q}{\mathcal{Q}}
\newcommand{\beqs}{\begin{equation*}}
\newcommand{\eeqs}{\end{equation*}}
\numberwithin{equation}{section}
 \theoremstyle{plain}
\newtheorem{theorem}{Theorem}[section]
\newtheorem{lemma}[theorem]{Lemma}
\theoremstyle{remark}
\begin{document}

\makeatletter
\def\imod#1{\allowbreak\mkern10mu({\operator@font mod}\,\,#1)}
\makeatother

\author{Alexander Berkovich}
   \address{Department of Mathematics, University of Florida, 358 Little Hall, Gainesville FL 32611, USA}
   \email{alexb@ufl.edu}

\author{Ali Kemal Uncu}
   \address{Department of Mathematics, University of Florida, 358 Little Hall, Gainesville FL 32611, USA}
   \email{akuncu@ufl.edu}

\title[New Weighted Partition Theorems]{New Weighted Partition Theorems with the Emphasis on the Smallest Part of Partitions}
     
\begin{abstract} We use the $q$-binomial theorem, the $q$-Gauss sum, and the ${}_2\phi_1 \rightarrow {}_2\phi_2$ transformation of Jackson to discover and prove many new weighted partition identities. These identities involve unrestricted partitions, overpartitions, and partitions with distinct even parts. Smallest part of the partitions plays an important role in our analysis. This work was motivated in part by the research of Krishna Alladi. 
\end{abstract}   
   
\keywords{$q$-Hypergeometric Identities, Partition Identities, Smallest Part of Partitions, Overpartitions, Ramanujan Lost Notebooks, Jackson's Transformation}

 \subjclass[2010]{05A15, 05A17, 05A19, 11B34, 11B75, 11P81, 11P84, 33D15}

\date{\today}
   
\maketitle
\section{Introduction}

A \textit{partition}, $\pi=(\lambda_1,\lambda_2,\dots)$, is a finite sequence of non-increasing positive integers. The empty sequence is conventionally considered to be the unique partition of zero. The elements $\lambda_i$ that appear in the list $\pi$ are called \textit{parts} of the partition $\pi$. The sum of all parts of a partition is called the \textit{norm} of a partition $\pi$. We call a partition $\pi$ \textit{a partition of} $n$ if its norm is $n$.

We list some useful statistics/notations that will be used in the paper. Given a partition $\pi$,
\begin{align*}
|\pi| \ &:= \text{norm of the partition } \pi,\\
s(\pi) \ &:=\text{smallest part of the partition }\pi,\\
\nu_e(\pi)\ &:= \text{number of even parts in }\pi,\\
\nu_o(\pi)\ &:= \text{number of odd parts in }\pi,\\
\nu(\pi) &:= \nu_e(\pi) + \nu_o(\pi) = \text{number of parts in }\pi,\\
\nu_{d}(\pi)\ &:= \text{number of different parts in }\pi.
\end{align*}
For example, $\pi = (10,9,5,5,4,1,1)$ is a partition of 35 with $s(\pi)=1$, $\nu(\pi)=7$, and $\nu_d(\pi)=5$. 

Alladi studied the weighted partition identities methodically. In 1997, among many interesting results, he noted the general identity:

\begin{theorem} [Alladi, 1997] \label{Overpartitions_general_theorem} Let $a$, $b$ and $q$ be variables.
\begin{equation}\label{overpartitions_general_GF}
\frac{(a(1-b)q;q)_n}{(aq;q)_n} = 1+ \sum_{\pi\in \mathcal{U}_{ n}} a^{\nu(\pi)} b^{\nu_d(\pi)} q^{|\pi|},
\end{equation} where $\U_{n}$ is the set of non-empty ordinary partitions into parts $\leq n$.
\end{theorem}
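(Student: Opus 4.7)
The plan is to rewrite the left-hand side of \eqref{overpartitions_general_GF} as a product over $k = 1, 2, \ldots, n$ and to expand each factor as a geometric series encoding the possible multiplicities of the part $k$ in a partition $\pi \in \U_n$.

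Concretely, I would start by observing that
\[
\frac{(a(1-b)q;q)_n}{(aq;q)_n} \;=\; \prod_{k=1}^{n} \frac{1-a(1-b)q^k}{1-aq^k},
\]
and then apply the elementary decomposition
\[
\frac{1-a(1-b)q^k}{1-aq^k} \;=\; 1 + \frac{abq^k}{1-aq^k} \;=\; 1 + b\sum_{j=1}^{\infty} a^{j} q^{jk}.
\]
This is the heart of the argument: the "$1$" corresponds to the part $k$ not appearing in $\pi$, while the term $b\,a^{j}q^{jk}$ corresponds to $k$ appearing with multiplicity $j \geq 1$. Such a term contributes $q^{jk}$ to the norm statistic, $a^{j}$ to the number-of-parts statistic, and exactly one factor of $b$ to the number-of-distinct-parts statistic.

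Expanding the whole product $\prod_{k=1}^n$ then amounts to summing over all choices of multiplicities $(m_1, m_2, \ldots, m_n)$ with $m_k \in \{0,1,2,\ldots\}$, which is the same as summing over $\pi \in \U_n \cup \{\emptyset\}$. The total weight attached to a given $\pi$ is
\[
a^{\sum_k m_k}\, b^{\#\{k\,:\,m_k\geq 1\}}\, q^{\sum_k k m_k} \;=\; a^{\nu(\pi)}\,b^{\nu_d(\pi)}\,q^{|\pi|}.
\]
Separating off the contribution of the empty partition, which gives $1$, recovers \eqref{overpartitions_general_GF} exactly.

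The argument presents essentially no obstacle, since once the key geometric expansion is written down the combinatorial bookkeeping is immediate. The only point worth stressing is that \emph{each distinct part used in $\pi$ picks up exactly one factor of $b$}: the $b$ in the expansion of $(1-a(1-b)q^k)/(1-aq^k)$ is factored out of the entire geometric series and is not raised to a power depending on the multiplicity $j$. This is precisely what ensures the $\nu_d$-statistic, as opposed to some variant weighted by multiplicities, appears in the exponent of $b$.
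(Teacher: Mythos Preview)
Your argument is correct. The key identity
\[
\frac{1-a(1-b)q^k}{1-aq^k} = 1 + b\sum_{j\ge 1} a^j q^{jk}
\]
is valid, and the subsequent expansion of the product $\prod_{k=1}^n$ as a sum over multiplicity vectors $(m_1,\dots,m_n)$ does recover the weight $a^{\nu(\pi)}b^{\nu_d(\pi)}q^{|\pi|}$ on each $\pi\in\U_n$, with the empty partition contributing the leading $1$.

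As for comparison: the paper does not supply a proof of Theorem~\ref{Overpartitions_general_theorem}. It is quoted as a result of Alladi (1997) and used only as background, in particular to motivate the overpartition interpretation in \eqref{weights_of_overpartitions}. So there is no argument in the paper to match yours against; your direct factor-by-factor expansion is the standard proof and is entirely adequate here.
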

In \eqref{overpartitions_general_GF} and in the rest of the paper we use the standard q-Pochhammer symbol notations defined in \cite{Theory_of_Partitions}, \cite{GasperRahman}. Let $L$ be a non-negative integer, then
\begin{equation*}
(a;q)_L := \prod_{i=0}^{L-1} (1-aq^{i}) \text{  and  } (a;q)_\infty :=\lim_{L\rightarrow\infty} (a;q)_L.
\end{equation*}
 
 We now discuss a special case of the Theorem~\ref{Overpartitions_general_theorem} that  plays important role in the study of overpartitions. We define an overpartition to be a partition where the last appearance of a part may come with a mark (usually put as an overhead bar on the part, hence the name). Any partition is an overpartition of the same number. One non-trivial example is $\bar{\pi}=(10,\bar{9},5,5,4,1,\bar{\tiny 1})$. All the statistics defined above translate in the obvious manner to overpartitions. The definition, the interpretation of overpartitions and the generating function for the number of overpartitions are given by Corteel and Lovejoy in their influential paper \cite{overpartitions}. 

We would like to define the following sets:
\begin{align*}
\U &:= \text{the set of all non-empty ordinary partitions,}\\
\mathcal{O} &:= \text{the set of all non-empty overpartitions.}
\end{align*}

The connection of the identity \eqref{overpartitions_general_GF} and overpartitions can be seen by setting $a=1$ and $b=2$, \begin{equation}\label{weights_of_overpartitions}
\frac{(-q;q)_n}{(q;q)_n} =1+ \sum_{\pi \in \mathcal{U}_{ n}} 2^{\nu_d(\pi)}q^{|\pi|}.
\end{equation}
The left side of the identity \eqref{weights_of_overpartitions} is interpreted as the generating function for the number of overpartitions into parts $\leq n$. The right-hand side of \eqref{weights_of_overpartitions} is the weighted connection of overpartitions with ordinary partitions. We can write the weighted connection between ordinary partitions and overpartitions abstractly \begin{equation}\label{over_connect_regular_abstract}
\sum_{\pi \in \mathcal{O}_{ n}} q^{|\pi|} = \sum_{\pi \in \mathcal{U}_{ n}} 2^{\nu_d(\pi)}q^{|\pi|}, \end{equation}where $\mathcal{O}_{ n}$ is the set of non-empty overpartitions into parts $\leq n$.

The equation \eqref{over_connect_regular_abstract} is an example of a weighted partition identity between
sets of partitions. In this paper we prove new weighted partition identities involving statistics other than $2^{\nu_d(\pi)}$. 

Section~\ref{Section_def} has necessary definitions and identities to follow the results in the paper. The weighted partition identities for ordinary partitions and overpartitions for the smallest part, $s(\pi)$, is given in Section~\ref{Section_s}. A weighted count of overpartitions' relation with the number of representations of a number as a sum of two squares will be given in Section~\ref{Section_s_v}. Section~\ref{Section_P_ped} has weighted partition identities related to partitions with distinct even parts. In Section~\ref{Section_not_0_mod_3} we provide a more involved weighted identity involving overpartitions into parts not divisible by 3.

\section{Definition and Background Information}\label{Section_def}

Partitions can be represented in the \textit{frequency notation} $\pi = (1^{f_1},2^{f_2},\dots)$ by writing parts of $\pi$ in a finite sequence format with exponents, where the exponents $f_i(\pi)$ of the natural numbers denote the number of appearances of that part in $\pi$. We abuse the notation and write $f_i$, frequency of $i$, when the partition is understood from the context. Similarly, we drop the zero frequencies in our notation to keep the notations neat. A zero frequency may still be used to indicate and stress an integer not being a part of a partition. For example the partition $\pi = (10,9,5,5,4,1,1)$ can be represented in the frequency notation as  $(1^2,2^0,3^0,4^1,5^2,6^0,7^0,8^0,9^1,10^1,11^0,\dots) = (1^2,4,5^2,7^0,9,10)$. Here $\pi$ is a partition of 35 where the frequency of 1, $f_1(\pi)=f_1=2$, $f_4=1$, $f_5=2\dots$ and the integer $7$ is not a part of $\pi$.

One can also extend the frequency notation to overpartitions by allowing sequence elements with a positive frequency to have an overhead bar meaning that the first appearance of that part is marked. The norm of overpartitions and ordinary partitions are defined the same way. In the frequency notation we can represent $\bar{\pi}$ as  $(\bar{\tiny 1}^2,4,5^2,\bar{9},10)$. 

Other representations of partitions include the \textit{Ferrers diagrams} and \textit{2-modular Ferrers diagrams} \cite[$\S$1.3]{Theory_of_Partitions}. The Ferrers diagram and the 2-modular Ferrers diagram are formed by drawing rows boxes where the row sum (count of boxes or the sum of the contents, respectively) adds up to the corresponding part of the partition. It should be reminded to the reader that in the 2-modular diagrams, only the boxes at the end of a row may be filled by 1 or 2; all the other boxes are filled with 2's. An example of the Ferrers diagram and a 2-modular Ferrers diagram is given in Table~\ref{Table_Ferrers_Diagrams}.

\begin{center}
\begin{table}[htb]\caption{The Ferrers Diagram and the 2-modular Ferrers Diagram of the partition $\pi=(10,9,5,5,4,1,1)$.}\label{Table_Ferrers_Diagrams}
\definecolor{cqcqcq}{rgb}{0.75,0.75,0.75}
\begin{tikzpicture}[line cap=round,line join=round,>=triangle 45,x=0.5cm,y=0.5cm]
\clip(0.5,-0.1) rectangle (19.5,7.5);
\draw [line width=1pt] (1,0)-- (2,0);
\draw [line width=1pt] (1,1)-- (2,1);
\draw [line width=1pt] (2,0)-- (2,7);
\draw [line width=1pt] (1,7)-- (11,7);
\draw [line width=1pt] (11,7)-- (11,6);
\draw [line width=1pt] (11,6)-- (1,6);
\draw [line width=1pt] (1,7)-- (1,0);
\draw [line width=1pt] (1,2)-- (5,2);
\draw [line width=1pt] (5,2)-- (5,7);
\draw [line width=1pt] (10,7)-- (10,5);
\draw [line width=1pt] (10,5)-- (1,5);
\draw [line width=1pt] (9,5)-- (9,7);
\draw [line width=1pt] (8,7)-- (8,5);
\draw [line width=1pt] (7,7)-- (7,5);
\draw [line width=1pt] (6,7)-- (6,3);
\draw [line width=1pt] (6,3)-- (1,3);
\draw [line width=1pt] (1,4)-- (6,4);
\draw [line width=1pt] (4,2)-- (4,7);
\draw [line width=1pt] (3,7)-- (3,2);
\draw [line width=1pt] (14,7)-- (19,7);
\draw [line width=1pt] (19,7)-- (19,5);
\draw [line width=1pt] (19,5)-- (14,5);
\draw [line width=1pt] (14,7)-- (14,0);
\draw [line width=1pt] (15,0)-- (15,7);
\draw [line width=1pt] (17,3)-- (17,7);
\draw [line width=1pt] (18,5)-- (18,7);
\draw [line width=1pt] (16,7)-- (16,2);
\draw [line width=1pt] (19,6)-- (14,6);
\draw [line width=1pt] (17,4)-- (14,4);
\draw [line width=1pt] (17,3)-- (14,3);
\draw [line width=1pt] (16,2)-- (14,2);
\draw [line width=1pt] (15,1)-- (14,1);
\draw [line width=1pt] (15,0)-- (14,0);
\draw (14.5,6.5) node[anchor=center] { 2};
\draw (15.5,6.5) node[anchor=center] { 2};
\draw (16.5,6.5) node[anchor=center] { 2};
\draw (17.5,6.5) node[anchor=center] { 2};
\draw (18.5,6.5) node[anchor=center] { 2};
\draw (17.5,5.5) node[anchor=center] { 2};
\draw (16.5,5.5) node[anchor=center] { 2};
\draw (15.5,5.5) node[anchor=center] { 2};
\draw (14.5,5.5) node[anchor=center] { 2};
\draw (14.5,4.5) node[anchor=center] { 2};
\draw (15.5,4.5) node[anchor=center] { 2};
\draw (15.5,3.5) node[anchor=center] { 2};
\draw (15.5,2.5) node[anchor=center] { 2};
\draw (14.5,3.5) node[anchor=center] { 2};
\draw (14.5,2.5) node[anchor=center] { 2};
\draw (18.5,5.5) node[anchor=center] { 1};
\draw (16.5,4.5) node[anchor=center] { 1};
\draw (16.5,3.5) node[anchor=center] { 1};
\draw (14.5,1.5) node[anchor=center] { 1};
\draw (14.5,0.5) node[anchor=center] { 1};
\draw (12,3) node[anchor=center] {,};
\end{tikzpicture}
\end{table}
\end{center}

Note that the \textit{conjugate} of a Ferrers diagram (drawing a Ferrers diagram column-wise and reading it row-wise) is also a partition. The conjugate of $\pi$ is $(7,5,5,5,4,2,2,2,2,1)$. Conjugation doesn't extend to 2-modular graphs directly. A partition's 2-modular diagram yields another 2-modular diagram under conjugation only when the original partition has distinct odd parts. The conjugate of the 2-modular graph of the example in Table~\ref{Table_Ferrers_Diagrams} does not yield a permissible 2-modular diagram.

Ferrers diagrams can be extended to overpartitions. One can easily mark the rows of the Ferrers diagrams by coloring the box at the end of the row to indicate that the related part of the partition is overlined. Conjugation of the ordinary Ferrers diagrams carry over for overpartitions without a hitch. It is easy to check that the conjugate of $\bar{\pi}=(10,\bar{9},5,5,4,1,\bar{\tiny 1})$ is $(\bar{7},5,5,5,4,2,2,2,\bar{\tiny 2},1)$.

We define the basic $q$-hypergeometric series as they appear in \cite{GasperRahman}. Let $r$ and $s$ be non-negative integers and $a_1,a_2,\dots,a_r,b_1,b_2,\dots,b_s,q,$ and $z$ be variables. Then \begin{equation*}_r\phi_s\left(\genfrac{}{}{0pt}{}{a_1,a_2,\dots,a_r}{b_1,b_2,\dots,b_s};q,z\right):=\sum_{n=0}^\infty \frac{(a_1;q)_n(a_2;q)_n\dots (a_r;q)_n}{(q;q)_n(b_1;q)_n\dots(b_s;q)_n}\left[(-1)^nq^{n\choose 2}\right]^{1-r+s}z^n.\end{equation*}
Let $a$, $b$, $c$, $q$, and $z$ be variables. The $q$-binomial theorem \cite[II.4, p. 236]{GasperRahman} is \begin{equation}\label{q_binomial}
{}_1\phi_0 \left(\genfrac{}{}{0pt}{}{a}{-};q,z \right) = \frac{(az;q)_\infty}{(z;q)_\infty},
\end{equation} 
and the $q$-Gauss sum \cite[II.8, p. 236]{GasperRahman} is \begin{equation}\label{q_Gauss}
{}_2\phi_1 \left(\genfrac{}{}{0pt}{}{a,\ b}{c};q,{c}/{ab} \right) = \frac{({c}/{a};q)_\infty ({c}/{b};q)_\infty}{(c;q)_\infty ({c}/{ab};q)_\infty}.
\end{equation} The Jackson ${}_2\phi_1$ to ${}_2\phi_2$ transformation \cite[III.4, p. 241]{GasperRahman} is \begin{equation}\label{Jackson_transformation}
{}_2\phi_1 \left(\genfrac{}{}{0pt}{}{a,b}{c};q,z\right) = \frac{(az;q)_\infty}{(z;q)_\infty}{}_2\phi_2 \left(\genfrac{}{}{0pt}{}{a,c/b}{c,az};q,bz\right).
\end{equation}
We would also like to recall the definition of the classical theta functions $\varphi$ and $\psi$ \begin{equation}\label{Theta_phi}
\varphi(q):=\sum_{n=-\infty}^\infty q^{n^2},\text{ and }\psi(q):=\sum_{n=0}^\infty q^{n(n+1)/2}.
\end{equation} The Gauss identities \cite[Cor 2.10, p. 23]{Theory_of_Partitions} for these functions will be of use:
\begin{align}
\label{Theta_phi_prod}
\varphi(-q) &= \sum_{n=-\infty}^\infty(-1)^n q^{n^2} = \frac{(q;q)_\infty}{(-q;q)_\infty},\\
\label{Theta_psi_product}\psi(q) &= \frac{(q^2;q^2)_\infty}{(q;q^2)_\infty}.
\end{align}


\section{Weighted Identities with respect to the Smallest Part of a Partition}\label{Section_s}

Let $\U^*$ be the subset of $\U$  such that for every $\pi\in \U^*$, $f_1(\pi)\equiv 1\mod{2}$. Next, we introduce a new partition statistic $t(\pi)$ to be the number defined by the properties
\begin{enumerate}[i.]
\item $f_i\equiv1\mod{2}$, for $1\leq i\leq t(\pi)$, 
\item and $f_{t(\pi)+1}\equiv 0 \mod{2}$.
\end{enumerate}  Note that for any $\pi\in \U$ with an even frequency of 1 (which could be 0) we have $t(\pi)=0$. Then we have the weighted partition identity between the set of ordinary partitions and its subset $\U^*$ as follows.

\begin{theorem}\label{Ordinary_Partitions_Combinatorial_Weighted_Theorem}
\begin{equation}\label{Ordinary_Partitions_Combinatorial_Weighted_Equation}
\sum_{\pi\in\U} (-1)^{s(\pi)+1} q^{|\pi|} = \sum_{\pi\in\U^*} t(\pi) q^{|\pi|}.
\end{equation}
\end{theorem}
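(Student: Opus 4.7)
The plan is to convert the identity into a pure $q$-series statement and prove it via Jackson's ${}_2\phi_1\to{}_2\phi_2$ transformation \eqref{Jackson_transformation}.

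First I will express both sides as generating functions. Partitions in $\U$ with smallest part equal to $k$ are enumerated by $\frac{1}{(q^k;q)_\infty}-\frac{1}{(q^{k+1};q)_\infty}=\frac{q^k}{(q^k;q)_\infty}$, so the left-hand side of \eqref{Ordinary_Partitions_Combinatorial_Weighted_Equation} equals $\sum_{k\geq 1}(-1)^{k+1}\frac{q^k}{(q^k;q)_\infty}$. For the right-hand side I will rewrite $t(\pi)=\sum_{j\geq 1}[t(\pi)\geq j]$ and swap the order of summation; the condition $t(\pi)\geq j$ is exactly that $f_1(\pi),\ldots,f_j(\pi)$ are all odd, so the inner sum has generating function $\prod_{i=1}^j \frac{q^i}{1-q^{2i}}\cdot\prod_{i>j}\frac{1}{1-q^i}=\frac{q^{j(j+1)/2}}{(q^2;q^2)_j(q^{j+1};q)_\infty}$, and the right-hand side equals $\sum_{j\geq 1}\frac{q^{j(j+1)/2}}{(q^2;q^2)_j(q^{j+1};q)_\infty}$.

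Next I will clear the common factor $1/(q;q)_\infty$ on both sides using $(q;q)_\infty=(q;q)_{k-1}(q^k;q)_\infty=(q;q)_j(q^{j+1};q)_\infty$ together with $(q;q)_j/(q^2;q^2)_j=1/(-q;q)_j$, which follows from $(1-q^i)(1+q^i)=1-q^{2i}$. After the shift $n=k-1$ on the left, the theorem reduces to the $q$-series identity
$$q\sum_{n\geq 0}(-q)^n(q;q)_n=\sum_{m\geq 1}\frac{q^{m(m+1)/2}}{(-q;q)_m}.$$
I will then establish this by applying \eqref{Jackson_transformation} with $a=b=q$, $z=-q$, and $c=0$ (valid since $(0;q)_n=1$ and both series converge for $|q|<1$). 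The left side becomes $\sum_{n\geq 0}(q;q)_n(-q)^n$; on the right, the prefactor collapses to $(-q^2;q)_\infty/(-q;q)_\infty=1/(1+q)$, and the resulting ${}_2\phi_2$ simplifies via $(-1)^nq^{\binom{n}{2}}(-q^2)^n=q^{n(n+3)/2}$ and $(-q^2;q)_n=(-q;q)_{n+1}/(1+q)$ to $\sum_{n\geq 0}q^{n(n+3)/2}/(-q;q)_{n+1}$. Reindexing $m=n+1$ yields $q^{-1}\sum_{m\geq 1}q^{m(m+1)/2}/(-q;q)_m$, which gives the desired identity after multiplying by $q$.

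The main obstacle I anticipate is pure bookkeeping: carrying signs and exponents correctly through the Pochhammer manipulations at $c=0$ in Jackson's transformation so that the ${}_2\phi_2$ collapses cleanly to the target $q^{m(m+1)/2}/(-q;q)_m$, and confirming that the generating function for partitions with $f_1,\ldots,f_j$ odd (without any constraint on $f_{j+1}$) really is $q^{j(j+1)/2}/[(q^2;q^2)_j(q^{j+1};q)_\infty]$. Once the parameter choice $a=b=q$, $z=-q$, $c=0$ in \eqref{Jackson_transformation} is in hand, the rest of the argument is routine.
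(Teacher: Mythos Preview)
Your proof is correct. Both you and the paper ultimately rely on Jackson's transformation \eqref{Jackson_transformation}, but you take a genuinely different route to set it up. The paper organizes the left-hand side of \eqref{Ordinary_Partitions_Combinatorial_Weighted_Equation} by the \emph{number of parts} (using the Ferrers-diagram argument of Table~\ref{Ferrers_adding}) to obtain $\sum_{n\geq 1}\frac{q^n}{(1+q^n)(q;q)_{n-1}}$, and then applies \eqref{Jackson_transformation} with $(a,b,c,z)=(0,-q,-q^2,q)$. You instead organize by the \emph{smallest part} to get $\sum_{k\geq 1}(-1)^{k+1}q^k/(q^k;q)_\infty$, multiply through by $(q;q)_\infty$, and apply \eqref{Jackson_transformation} with $(a,b,c,z)=(q,q,0,-q)$. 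Your left-hand side interpretation is more direct---no conjugation or row-wise addition of Ferrers diagrams is needed---and clearing $(q;q)_\infty$ leads to a cleaner terminating-type identity to which Jackson applies immediately. On the right-hand side, your rewriting $t(\pi)=\sum_{j\geq 1}\chi(t(\pi)\geq j)$ is exactly the counting argument the paper uses to arrive at \eqref{Ordinary_revelation_right_side}; you land on the same $q$-series $\sum_{j\geq 1}q^{j(j+1)/2}/[(q^2;q^2)_j(q^{j+1};q)_\infty]$. The only point worth a remark is that both instances of Jackson's transformation here involve a zero parameter ($a=0$ in the paper, $c=0$ in yours), and since $(0;q)_n=1$ these specializations are perfectly valid; your verification of convergence is adequate.
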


The left side identity is the weighted count of partitions of a given norm $n$ where every partition with an odd smallest part gets counted with $+1$ and the partitions of $n$ with an even smallest part gets counted with $-1$. There are $42$ partitions of 10 in total. From this number, 9 partitions, $(2^5)$,  $(2^3,4)$, $(2^2,3^2)$, $(2^2,6)$, $(2,3,5)$, $(2,4^2)$, $(2,8)$, $(4,6)$, $(10)$, have an even smallest part. Therefore, from the count of the left-hand side of \eqref{Ordinary_Partitions_Combinatorial_Weighted_Equation}, the coefficient of the $q^{10}$ is $24 = 42 - 2\cdot 9 $. The right-hand side count and the weights can be found in Table~\ref{Table_Ordinary_Partitions_t_weight}.

\begin{table}[htb]\caption{Example of Theorem~\ref{Ordinary_Partitions_Combinatorial_Weighted_Theorem} with $|\pi|=10$.}\label{Table_Ordinary_Partitions_t_weight}
\begin{center}\vspace{-.5cm}
\[\begin{array}{cc|cc}
\pi\in\U^*  &  t(\pi) &  \pi\in\U^*  &  t(\pi) \\[-2ex]& &  \\
(1,2,3,4) 	& 4		&   (1,4,5) 		& 1		\\
(1,2^3,3)	& 3		&   (1,2^2,5) 		&  1		\\
(1^5,2,3)		& 3		&   (1^5,5) 		&	 1	\\
(1,2,7) 	& 2		&   (1^3,3,4)	 	&	1	\\
(1^3,2,5) 	& 2		&   (1,3^3)  		& 1	\\
(1,9)		& 1		&	(1^3,2^2,3)		& 1\\
(1^3,7) 	& 1		&   (1^7,3)  		& 1\\
(1,3,6) 	& 1		&     			&\\
\end{array}\]
The sum of the weights is 24, which is the same as the count of partitions with the altering sign with respect to their smallest part's parity.
\end{center}
\end{table}

The proof of Theorem~\ref{Ordinary_Partitions_Combinatorial_Weighted_Theorem} will be given as the combinatorial interpretation of the following analytic identity.

\begin{theorem}\label{Ordinary_Partitions_Analytic_Theorem}
\begin{equation}\label{Ordinary_Partitions_Analytic_Identity}
\sum_{n\geq 1} \frac{q^n}{1+q^n}\frac{\tiny 1}{(q;q)_{n-1}} = \sum_{n\geq 1} \frac{q^{n(n+1)/2}}{(q^2;q^2)_n (q^{n+1};q)_\infty}.
\end{equation}
\end{theorem}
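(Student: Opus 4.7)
The plan is to reduce the left-hand side to a single basic hypergeometric series of ${}_2\phi_1$ type and then apply Jackson's transformation~\eqref{Jackson_transformation} to produce a ${}_2\phi_2$ which rearranges into the right-hand side. Of the three tools highlighted in the abstract, this route uses the $q$-binomial theorem~\eqref{q_binomial} (in its Euler limit) together with Jackson's ${}_2\phi_1\to{}_2\phi_2$ transformation.

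First I would expand $q^n/(1+q^n)=\sum_{k\geq 1}(-1)^{k-1}q^{kn}$, interchange the order of summation, and collapse the inner sum $\sum_{n\geq 1}q^{kn}/(q;q)_{n-1}=q^k/(q^k;q)_\infty$ by Euler's identity (the $a=0$ case of~\eqref{q_binomial}). After rewriting $(q^k;q)_\infty=(q;q)_\infty/(q;q)_{k-1}$ and setting $j=k-1$, the left-hand side becomes
\[
\frac{q}{(q;q)_\infty}\sum_{j\geq 0}(-q)^j(q;q)_j \;=\; \frac{q}{(q;q)_\infty}\,{}_2\phi_1\!\left(\genfrac{}{}{0pt}{}{q,\,q}{0};q,-q\right),
\]
the choice $c=0$ being legitimate since $(0;q)_j=1$.

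Next I would apply~\eqref{Jackson_transformation} with $a=b=q$, $c=0$, $z=-q$. The prefactor collapses to $(-q^2;q)_\infty/(-q;q)_\infty=1/(1+q)$, while the resulting ${}_2\phi_2(q,0;0,-q^2;q,-q^2)$ simplifies, using $(0;q)_n=1$ and $(-1)^n q^{\binom{n}{2}}(-q^2)^n=q^{n(n+3)/2}$, to $\sum_{n\geq 0}q^{n(n+3)/2}/(-q^2;q)_n$. Absorbing $1/(1+q)$ via $(1+q)(-q^2;q)_n=(-q;q)_{n+1}$ and shifting $m=n+1$, the exponent collapses since $1+(m-1)(m+2)/2=m(m+1)/2$, and one obtains
\[
\frac{1}{(q;q)_\infty}\sum_{m\geq 1}\frac{q^{m(m+1)/2}}{(-q;q)_m}.
\]
A final use of $(-q;q)_m=(q^2;q^2)_m/(q;q)_m$ together with $(q;q)_m/(q;q)_\infty=1/(q^{m+1};q)_\infty$ produces the right-hand side of~\eqref{Ordinary_Partitions_Analytic_Identity}.

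The main obstacle I anticipate is not the computation itself but the clean identification of the correct ansatz: recognizing that the Eulerian-type sum $\sum_{j\geq 0}(-q)^j(q;q)_j$ should be read as a ${}_2\phi_1$ with $c=0$ on which Jackson can be invoked, and then keeping track of the $(-1)^n q^{\binom{n}{2}}$ factor built into the ${}_2\phi_2$ definition along with the index shift $n\mapsto n+1$ that realigns $q^{n(n+3)/2}/(-q;q)_{n+1}$ into the triangular form $q^{m(m+1)/2}/(-q;q)_m$.
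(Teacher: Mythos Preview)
Your argument is correct. The overall architecture matches the paper's proof: recast the left side as a ${}_2\phi_1$, apply Jackson's transformation~\eqref{Jackson_transformation}, and massage the resulting ${}_2\phi_2$ into the right side. The one genuine difference is in the first step. The paper rewrites $1/(1+q^n)$ directly via the Pochhammer identity $\dfrac{1+q}{1+q^n}=\dfrac{(-q;q)_{n-1}}{(-q^2;q)_{n-1}}$, obtaining
\[
\text{LHS}=\frac{q}{1+q}\,{}_2\phi_1\!\left(\genfrac{}{}{0pt}{}{0,\,-q}{-q^2};q,q\right),
\]
whereas you expand $q^n/(1+q^n)$ as a geometric series, swap sums, and apply Euler's identity to reach a \emph{different} ${}_2\phi_1$, namely $\dfrac{q}{(q;q)_\infty}\,{}_2\phi_1\!\left(\genfrac{}{}{0pt}{}{q,\,q}{0};q,-q\right)$. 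Pleasantly, Jackson sends both of these to the \emph{same} ${}_2\phi_2$ (with parameters $q,0$ upstairs and $-q^2,0$ downstairs, argument $-q^2$, and combined prefactor $q/((1+q)(q;q)_\infty)$), so from that point on the two proofs are literally identical. The paper's route is a bit shorter since it avoids the double-sum interchange and the Euler summation; your route has the virtue of making the appearance of $(q;q)_\infty$ in the denominator transparent from the outset. The limiting case $c=0$ of Jackson that you invoke is harmless here since $(0;q)_n=1$ on both sides, and your convergence/interchange step is justified for $|q|<1$.
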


\begin{proof}
Recall that $(0;q)_n =1$ for any integer $n\geq 0$. Also note that \begin{equation}\label{factorial_simplification}\frac{1+q}{1+q^n} = \frac{(-q;q)_{n-1}}{(-q^2;q)_{n-1}},\end{equation} for positive $n$. We start by writing the left--hand side of \eqref{Ordinary_Partitions_Analytic_Identity} as a $q$-hypergeometric function. Multiplying and dividing with $1+q$ and using \eqref{factorial_simplification}, shifting the sum with $n\mapsto n+1$, and finally grouping out $q/(1+q)$ yields \begin{equation}\label{Ordinary_2_Phi_1}
\sum_{n\geq 1} \frac{q^n}{1+q^n}\frac{\tiny 1}{(q;q)_{n-1}} = \frac{q}{1+q} {}_2\phi_1 \left(\genfrac{}{}{0pt}{}{0,-q}{-q^2};q,q\right).
\end{equation} 
We now apply the Jackson's transformation \eqref{Jackson_transformation} to \eqref{Ordinary_2_Phi_1}. This gives us
\begin{equation}\label{Ordinary_2_phi_2}
\frac{q}{1+q} {}_2\phi_1 \left(\genfrac{}{}{0pt}{}{0,-q}{-q^2};q,q\right) = \frac{q}{1+q} \frac{\tiny 1}{(q;q)_\infty}{}_2\phi_2 \left(\genfrac{}{}{0pt}{}{0,\, q}{-q^2,\, 0};q,-q^2\right).
\end{equation}
Distributing the front factor to each summand on the right-hand side of \eqref{Ordinary_2_phi_2}, doing the necessary simplifications, and finally shifting the summation index $n\mapsto n-1$ finishes the proof.
\end{proof}

Theorem~\ref{Ordinary_Partitions_Analytic_Theorem} is the analytical version of Theorem~\ref{Ordinary_Partitions_Combinatorial_Weighted_Theorem}. We will now move on to the generating function interpretations of both sides of \eqref{Ordinary_Partitions_Analytic_Identity}. This study will in-turn prove Theorem~\ref{Ordinary_Partitions_Combinatorial_Weighted_Theorem}.

We start with the left-hand side sum \begin{equation}\label{Ordinary_Left_s}\sum_{n\geq 1} \frac{q^n}{1+q^n}\frac{\tiny 1}{(q;q)_{n-1}},\end{equation} of \eqref{Ordinary_Partitions_Analytic_Identity}. For a positive integer $n$, the summand \begin{equation}\label{Ordinary_Analytic_left_weighted_bit}\frac{q^n}{1+q^n} = \sum_{k\geq 1} (-1)^{k+1} q^{nk} = q^n -q^{2n} +q^{3n} \dots\end{equation} is the generating function for the number of partitions of the form $(k^n)$ where the partition gets counted with the weight $+1$ if the part $k$ is odd and it gets counted with the weight $-1$ if the part $k$ is even. The factor \begin{equation}\label{Ordinary_Analytic_left_facorial_bit}\frac{\tiny 1}{(q;q)_{n-1}}\end{equation} is the generating function for the number of partitions into parts less than $n$. With conjugation in mind, another equivalent interpretation of \eqref{Ordinary_Analytic_left_facorial_bit} is that it is the generating function for the number of partitions into less than $n$ parts. 

We put the partitions counted by the factors in the summand into a single partition bijectively by part-by-part addition. For the same positive integer $n$, let $\pi_1$ be a partition counted by \eqref{Ordinary_Analytic_left_weighted_bit} and a partition $\pi_2$ counted by \eqref{Ordinary_Analytic_left_facorial_bit}. We know that $\pi_1 = (k^n)$ for some positive integer $k$. Starting from the largest part of $\pi_2$, we add a part of $\pi_2$ to a part of $\pi_1$ and put the outcome as a part of a new partition $\pi$. Recall that a part of a partition is a positive integer that is an element of that partition. The partition $\pi_2$ has less than $n$ parts. Therefore, there is at least one part of $\pi_1$ that does not get anything added to it. We add these leftover parts of $\pi_1$ to $\pi$ after the additions. This way we know that the new partition $\pi$ has exactly $n$ parts, where the smallest part is exactly $k$. This can be easily demonstrated using Ferrers diagrams in Table~\ref{Ferrers_adding}.

\begin{table}[htb]\caption{Demonstration of putting together partitions in the summand of \eqref{Ordinary_Left_s}}\label{Ferrers_adding}
\begin{center}\vspace{-.5cm}
\definecolor{cqcqcq}{rgb}{0.75,0.75,0.75}
\begin{tikzpicture}[line cap=round,line join=round,>=triangle 45,x=0.18cm,y=0.18cm]
\clip(-1.5,-2) rectangle (83.5,24);
\draw [line width=1pt] (4,22)-- (13,22);
\draw [line width=1pt] (13,22)-- (13,1);
\draw [line width=1pt] (13,1)-- (4,1);
\draw [line width=1pt] (4,1)-- (4,22);
\draw [line width=1pt] (15,22)-- (15,4);
\draw [line width=1pt] (41,22)-- (15,22);
\draw [line width=1pt] (15,4)-- (20,4);
\draw [line width=1pt] (20,4)-- (20,5);
\draw [line width=1pt] (20,5)-- (22,5);
\draw [line width=1pt] (22,8)-- (22,5);
\draw [line width=1pt] (22,8)-- (24,8);
\draw [line width=1pt] (24,8)-- (24,9);
\draw [line width=1pt] (24,9)-- (28,9);
\draw [line width=1pt] (28,9)-- (28,12);
\draw [line width=1pt] (28,12)-- (33,12);
\draw [line width=1pt] (33,12)-- (33,13);
\draw [line width=1pt] (33,13)-- (34,13);
\draw [line width=1pt] (34,13)-- (34,14);
\draw [line width=1pt] (34,14)-- (36,14);
\draw [line width=1pt] (36,14)-- (36,18);
\draw [line width=1pt] (36,18)-- (37,18);
\draw [line width=1pt] (37,18)-- (37,19);
\draw [line width=1pt] (37,19)-- (39,19);
\draw [line width=1pt] (39,19)-- (39,20);
\draw [line width=1pt] (39,20)-- (41,20);
\draw [line width=1pt] (41,20)-- (41,22);
\draw [line width=1pt] (48,22)-- (57,22);
\draw [line width=1pt] (57,1)-- (48,1);
\draw [line width=1pt] (48,1)-- (48,22);
\draw [line width=1pt] (83,22)-- (57,22);
\draw [line width=1pt] (57,4)-- (62,4);
\draw [line width=1pt] (62,4)-- (62,5);
\draw [line width=1pt] (62,5)-- (64,5);
\draw [line width=1pt] (64,8)-- (64,5);
\draw [line width=1pt] (64,8)-- (66,8);
\draw [line width=1pt] (66,8)-- (66,9);
\draw [line width=1pt] (66,9)-- (70,9);
\draw [line width=1pt] (70,9)-- (70,12);
\draw [line width=1pt] (70,12)-- (75,12);
\draw [line width=1pt] (75,12)-- (75,13);
\draw [line width=1pt] (75,13)-- (76,13);
\draw [line width=1pt] (76,13)-- (76,14);
\draw [line width=1pt] (76,14)-- (78,14);
\draw [line width=1pt] (78,14)-- (78,18);
\draw [line width=1pt] (78,18)-- (79,18);
\draw [line width=1pt] (79,18)-- (79,19);
\draw [line width=1pt] (79,19)-- (81,19);
\draw [line width=1pt] (81,19)-- (81,20);
\draw [line width=1pt] (81,20)-- (83,20);
\draw [line width=1pt] (83,20)-- (83,22);
\draw [line width=1pt] (57,1)-- (57,4);
\draw (8,15) node[anchor=center] {$ \frac{q^n}{1+q^n} $};
\draw (25,15) node[anchor=center] {$ \frac{\tiny 1}{(q;q)_{n-1}} $};
\draw (43,15) node[anchor=center] {$ \rightarrow $};
\draw [line width=1pt] (14.66,3.66)-- (15.33,3.66);
\draw [line width=1pt] (14.66,1)-- (15.33,1);
\draw [line width=1pt] (15,3.66)-- (15,1);
\draw (17.0,2.3) node[anchor=center] {$ \geq 1$};
\draw [line width=1pt] (2.66,22)-- (3.33,22);
\draw [line width=1pt] (2.66,1)-- (3.33,1);
\draw [line width=1pt] (3,22)-- (3,1);
\draw (0.7,12) node[anchor=center] {$\nu(\pi_1)$};
\draw (8,23) node[anchor=center] {$\pi_1$};
\draw (25,23) node[anchor=center] {$\pi_2$};
\draw (65,23) node[anchor=center] {$\pi$};
\draw [line width=1pt] (4,0.33)-- (4,-0.33);
\draw [line width=1pt] (4,0)-- (13,0);
\draw [line width=1pt] (13,0.33)-- (13,-0.33);
\draw [line width=1pt] (48,0.33)-- (48,-0.33);
\draw [line width=1pt] (48,0)-- (57,0);
\draw [line width=1pt] (57,0.33)-- (57,-0.33);
\draw (8,-1.25) node[anchor=center] {$s(\pi_1)$};
\draw (52.9,-1.25) node[anchor=center] {$s(\pi)=s(\pi_1)$};
\draw [line width=1pt] (57,0.33)-- (57,-0.33);
\draw [line width=1pt] (46.66,22)-- (47.33,22);
\draw [line width=1pt] (46.66,1)-- (47.33,1);
\draw [line width=1pt] (47,22)-- (47,1);
\draw (44.8,12) node[anchor=center] {$\nu(\pi)$};
\draw (44.8,10) node[anchor=center] {$=$};
\draw (44.75,8.5) node[anchor=center] {$\nu(\pi_1)$};
\end{tikzpicture}
\end{center}
\end{table}

Moreover, the partition $\pi$ gets counted with the weight $+1$ if the smallest part is odd and it gets counted with the weight $-1$ if the smallest part is even. The sum of all these terms gives us the generating function for the weighted count of ordinary partitions from $\U$. Hence,
\begin{equation}\label{Ordinary_left_revelation}
\sum_{n\geq 1} \frac{q^n}{1+q^n}\frac{\tiny 1}{(q;q)_{n-1}} = \sum_{\pi\in\U} (-1)^{s(\pi)+1} q^{|\pi|},
\end{equation} where $s(\pi)$ is the smallest part of the partition $\pi$.

The right-hand side summation \begin{equation}\label{Over_right_sum_s}\sum_{n\geq 1} \frac{q^{n(n+1)/2}}{(q^2;q^2)_n (q^{n+1};q)_\infty}\end{equation} of \eqref{Ordinary_Partitions_Analytic_Identity} can also be interpreted as a weighted count of partitions. For some positive integer $n$, the term $q^{n(n+1)/2}$ can be thought of as the generating function of the partition $\pi^*_1 = (1,2,3,4,\dots,n)$ where every part less than or equal to $n$ appears exactly one time. The factor \begin{equation}\label{Ordinary_right_q_2_factor}
\frac{\tiny 1}{(q^2;q^2)_n}
\end{equation} is the generating function for partitions into parts $\leq n$ where every part appears with an even frequency. Let $\pi^*_2$ be a partition counted by \eqref{Ordinary_right_q_2_factor}. By adding the frequencies of $\pi^*_1$ and $\pi^*_2$ we get another partition \[\pi^* = (1^{f_1},2^{f_2},\dots,n^{f_n}),\] where all $f_i\equiv 1\mod{\tiny 2}$. The quotient \begin{equation}\label{Ordinary_right_q_n_1_factor}
\frac{\tiny 1}{(q^{n+1};q)_\infty}
\end{equation} is the generating function for the number of partitions into parts $> n$. Therefore, for a partition $\pi'$ that is counted by \eqref{Ordinary_right_q_n_1_factor} one can put together $\pi^*$ and $\pi'$ without the need of adding any frequencies. Call the outcome partition of merging $\pi^*$ and $\pi'$, $\pi$. 

With this interpretation, the partitions counted by \eqref{Over_right_sum_s} have the frequency restriction that $f_1(\pi)\equiv {1\mod {\tiny 2}}$. Also, let $i$ be the first positive integer where $f_i(\pi)$ is even (maybe zero). It is obvious that the partition $\pi$ might be the final outcome of the merging procedure explained above for any summand in \eqref{Over_right_sum_s} as long as the index of the summand is $< i$. Therefore, the partition $\pi$ is weighted by the number of the parts in its initial chain of odd frequencies of parts. This proves \begin{equation}\label{Ordinary_revelation_right_side}\sum_{n\geq 1} \frac{q^{n(n+1)/2}}{(q^2;q^2)_n (q^{n+1};q)_\infty} = \sum_{\pi\in\U^*} t(\pi) q^{|\pi|},\end{equation} where $t(\pi)$ is as defined in Theorem~\ref{Ordinary_Partitions_Combinatorial_Weighted_Theorem}. The identities \eqref{Ordinary_left_revelation} and \eqref{Ordinary_revelation_right_side} together prove Theorem~\ref{Ordinary_Partitions_Combinatorial_Weighted_Theorem}.

Now we move on to another analytical identity similar to \eqref{Ordinary_Partitions_Analytic_Identity}. This identity will later prove a weighted partition identity for overpartitions.

\begin{theorem}\label{Over_Analytical_Theorem}
\begin{equation}\label{Over_Analytical_Identity} \sum_{n\geq 1} \frac{2q^n}{1+q^n} \frac{(-q;q)_{n-1}}{(q;q)_{n-1}} = \sum_{n\geq 0} \frac{q^{n(n+1)/2}}{(q;q)_n}\frac{2q^{n+1}}{1-q^{2(n+1)}} \frac{(-q^{n+2};q)_\infty}{(q^{n+2};q)_\infty}
\end{equation}
\end{theorem}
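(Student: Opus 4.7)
The plan is to mirror the proof of Theorem~\ref{Ordinary_Partitions_Analytic_Theorem}: recast the left-hand side as a ${}_2\phi_1$, apply Jackson's transformation \eqref{Jackson_transformation}, and reconcile the resulting expression with the right-hand side through elementary Pochhammer manipulations.

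First I would use identity \eqref{factorial_simplification}, $(1+q)/(1+q^n) = (-q;q)_{n-1}/(-q^2;q)_{n-1}$, to rewrite the left-hand side as $\frac{2}{1+q}\sum_{n\geq 1} \frac{(-q;q)_{n-1}^2}{(q;q)_{n-1}(-q^2;q)_{n-1}}\,q^n$, and then shift $n\mapsto n+1$ to recognize this as
$$\frac{2q}{1+q}\,{}_2\phi_1\!\left(\genfrac{}{}{0pt}{}{-q,\,-q}{-q^2};q,q\right).$$
I would then apply Jackson's transformation with parameters $(a,b,c,z)=(-q,-q,-q^2,q)$, noting $az=bz=-q^2$ and $c/b=q$. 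This produces
$$\frac{2q\,(-q^2;q)_\infty}{(1+q)(q;q)_\infty}\,{}_2\phi_2\!\left(\genfrac{}{}{0pt}{}{-q,\,q}{-q^2,\,-q^2};q,-q^2\right),$$
where inside the ${}_2\phi_2$ the factor $(q;q)_n$ in the numerator cancels the $(q;q)_n$ from the ${}_r\phi_s$ definition, and $(-1)^n q^{\binom{n}{2}}(-q^2)^n$ simplifies to $q^{n(n+3)/2}$.

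Next I would clean up using $(-q;q)_\infty=(1+q)(-q^2;q)_\infty$ together with $(-q^2;q)_n=(-q;q)_n(1+q^{n+1})/(1+q)$, and shift $n\mapsto n-1$, to reduce the left-hand side to
$$\frac{2(-q;q)_\infty}{(q;q)_\infty}\sum_{n\geq 1}\frac{q^{n(n+1)/2}}{(-q;q)_{n-1}(1+q^n)^2}.$$

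The final step is to reduce the right-hand side to the same form. I would write $(-q^{n+2};q)_\infty=(-q;q)_\infty/(-q;q)_{n+1}$ and $(q^{n+2};q)_\infty=(q;q)_\infty/(q;q)_{n+1}$, expand $1-q^{2(n+1)}=(1-q^{n+1})(1+q^{n+1})$, shift $n\mapsto n-1$, and observe that the combination $(q;q)_n/[(q;q)_{n-1}(1-q^{2n})]$ collapses to $1/(1+q^n)$. This yields
$$\frac{2(-q;q)_\infty}{(q;q)_\infty}\sum_{n\geq 1}\frac{q^{n(n+1)/2}}{(1+q^n)\,(-q;q)_n},$$
which matches the reduced left-hand side via $(-q;q)_n=(-q;q)_{n-1}(1+q^n)$. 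The main obstacle here is purely bookkeeping: tracking the interplay of $(-q;q)_n$, $(-q^2;q)_n$, and the $(1+q^m)$ factors is entirely mechanical once Jackson's transformation has been invoked with the correct parameters. The conceptual insight is identical to Theorem~\ref{Ordinary_Partitions_Analytic_Theorem}, only the input parameters $(a,b)=(-q,-q)$ differ from the $(0,-q)$ used there.
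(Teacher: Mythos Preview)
Your proposal is correct and follows essentially the same route as the paper: rewrite the left-hand side as $\frac{2q}{1+q}\,{}_2\phi_1(-q,-q;-q^2;q,q)$ via \eqref{factorial_simplification}, apply Jackson's transformation \eqref{Jackson_transformation} with $(a,b,c,z)=(-q,-q,-q^2,q)$, and then simplify. The paper compresses the post-transformation bookkeeping into a single sentence, whereas you spell out the Pochhammer reductions explicitly, but the argument is the same.
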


\begin{proof}Multiply and divide the left-hand side of \eqref{Over_Analytical_Identity} by $(1+q)$, use \eqref{factorial_simplification}, and write it as a $q$-hypergeometric series: \begin{equation}\label{Over_left_to_2_phi_1}
\sum_{n\geq 1} \frac{2q^n}{1+q^n} \frac{(-q;q)_{n-1}}{(q;q)_{n-1}} = \frac{2q}{1+q} {}_2\phi_1 \left(\genfrac{}{}{0pt}{}{-q,-q}{-q^2};q,q\right).
\end{equation} Now we apply the transformation \eqref{Jackson_transformation} to \eqref{Over_left_to_2_phi_1}. This yields, 
\begin{equation}\label{Over_Jackson_transformed}
\frac{2q}{1+q} {}_2\phi_1 \left(\genfrac{}{}{0pt}{}{-q,-q}{-q^2};q,q\right) = \frac{2q}{1+q}\frac{(-q^2;q)_\infty}{(q;q)_\infty} {}_2\phi_2 \left(\genfrac{}{}{0pt}{}{-q,q}{-q^2,-q^2};q,-q^2\right).
\end{equation}Distributing the front factor to each summand, doing the necessary simplifications, and regrouping terms shows that the right-hand sides of identities \eqref{Over_Analytical_Identity} and \eqref{Over_Jackson_transformed} are equal.
\end{proof}

The combinatorial interpretation of \eqref{Over_Analytical_Identity} is similar to the one of \eqref{Ordinary_Partitions_Analytic_Identity}. The left-hand side sum \[\sum_{n\geq 1} \frac{2q^n}{1+q^n} \frac{(-q;q)_{n-1}}{(q;q)_{n-1}}\] of \eqref{Over_Analytical_Identity}. For a given $n$ the summand factor \[\frac{2q^n}{1+q^n}\] is the generating function of the number of overpartitions into exactly $n$ parts of the same size, where the partitions are counted with weight $+1$ if the part is odd and with $-1$ if the part is even. In other words, it is the generating function for the number of partitions $(k^n)$ and $(\bar{k}^n)$ for any integer $k\geq 1$, where these partitions are counted with the weight $(-1)^{k+1}$. The other factor \begin{equation}\label{overpartitions_factor_less_than_n}\frac{(-q;q)_{n-1}}{(q;q)_{n-1}},\end{equation} (by \eqref{weights_of_overpartitions}) is the generating function for the number of overpartitions with strictly less than $n$ parts. As we did in the proof of Theorem~\ref{Ordinary_Partitions_Combinatorial_Weighted_Theorem}, we put the parts of these partitions together. This part-by-part addition gives an overpartition in exactly $n$ parts with the smallest part $k$. And coming from the first factor we count these partitions with weight $+1$ if the smallest part $k$ is odd and with weight $-1$ if $k$  is even. Hence,
\begin{equation}\label{Over_left_combinatorial}
\sum_{n\geq 1} \frac{2q^n}{1+q^n} \frac{(-q;q)_{n-1}}{(q;q)_{n-1}} = \sum_{\pi\in\mathcal{O}} (-1)^{s(\pi)+1}q^{|\pi|}.
\end{equation}

The right-hand side of \eqref{Over_Analytical_Identity} can be interpreted  in a way  similar to that of \eqref{Over_right_sum_s}. For some non-negative integer $n$, the factor \begin{equation}\label{Over_right_factor_1}\frac{q^{n(n+1)/2}}{(q;q)_n}\end{equation} is the generating function for number of partitions of the type $(1^{f_1},2^{f_2},\dots,n^{f_n})$, where $f_i\geq 1$ for all $1\leq i\leq n$, as $n(n+1)/2 = 1+2+\dots+n$. The rest of the factors \begin{equation}\label{Over_right_factor_2_3}\frac{2q^{n+1}}{1-q^{2(n+1)}} \frac{(-q^{n+2};q)_\infty}{(q^{n+2};q)_\infty}\end{equation} can be interpreted as the generating function for the number of overpartitions where the smallest part (which definitely appears in the partition) is $n+1$ and that part has an odd frequency. 

There is no overlapping in the size of the parts in the partitions counted by \eqref{Over_right_factor_1} and \eqref{Over_right_factor_2_3} for a fixed $n$. One can merge these partitions into a single partition without any need of non-trivial addition of frequencies. On the other hand, an outcome overpartition may be coming from different merged couples of partitions/overpartitions. Given an outcome overpartition, there is no clean cut point that would indicate where the overpartition counted by \eqref{Over_right_factor_2_3} started. The only indication is the odd frequency of the smallest part of overpartitions. Also, we know that every part below the smallest part of overpartition in the combined partition is coming from a partition counted by the generating function \eqref{Over_right_factor_1}. In particular, 1 appears as a part in any outcome of this merging process. Therefore, we need to keep account of all these possible connection points when we are finding the count of a partition coming from the right-hand side of \eqref{Over_Analytical_Identity}. By going through only the odd frequencies in a given partition and counting the number of larger parts with the overpartition weights, we can find the total count of combinations that would yield the same merged overpartition images. 

Given a partition $\pi$, let $m(\pi)$ be the smallest positive integer that is not a part of $\pi$. Let $\nu_d(\pi,n)$ be the number of different parts $\geq n$ in partition $\pi$. Let 
\begin{equation}\label{truth_function}\chi(\textit{statement})=\left\{\begin{array}{cc}
1, &\text{if the \textit{statement} is true},\\
0, &\text{otherwise},
\end{array}\right. \end{equation}
be the \textit{truth} function. 

Then, the right-hand side of \eqref{Over_Analytical_Identity} can be written as a weighted count of partitions as \begin{equation}\label{Over_right_combinatorial}
\sum_{n\geq 0} \frac{q^{n(n+1)/2}}{(q;q)_n}\frac{2q^{n+1}}{1-q^{2(n+1)}} \frac{(-q^{n+2};q)_\infty}{(q^{n+2};q)_\infty} = \sum_{\pi\in\U} \tau(\pi) q^{|\pi|},
\end{equation} where \begin{equation}\label{Over_Tau_Def}
\tau(\pi) = \sum_{i=1}^{m(\pi)} \chi(f_{i}\equiv1(\text{mod }2)) 2^{\nu_d(\pi,i)}.
\end{equation} 

This study proves the combinatorial version of Theorem~\ref{Over_Analytical_Theorem}. We put \eqref{Over_left_combinatorial} and \eqref{Over_right_combinatorial} together, and get the following theorem. 

\begin{theorem}\label{Over_Weighted_Theorem}
\begin{equation}\label{Over_Weighted_Identity}\sum_{\pi\in\mathcal{O}} (-1)^{s(\pi)+1}q^{|\pi|} = \sum_{\pi\in\U} \tau(\pi) q^{|\pi|},
\end{equation} where $\tau(\pi)$ is defined as in \eqref{Over_Tau_Def}.
\end{theorem}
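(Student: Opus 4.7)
The plan is to synthesize the analytic identity of Theorem~\ref{Over_Analytical_Theorem} with two combinatorial generating-function interpretations, one for each side of \eqref{Over_Analytical_Identity}. After both sides have been recast as weighted sums over partitions, the theorem follows by equating them.

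On the left, the factor $\tfrac{2q^n}{1+q^n}$ contributes rectangular overpartitions of shape $(k^n)$ or $(\bar{k}^n)$ weighted by $(-1)^{k+1}$, while $\tfrac{(-q;q)_{n-1}}{(q;q)_{n-1}}$ generates overpartitions into strictly fewer than $n$ parts via \eqref{weights_of_overpartitions} together with conjugation. Part-by-part addition assembles these into a generic $\pi\in\mathcal{O}$ of $n$ parts whose smallest part is $k$, counted with weight $(-1)^{s(\pi)+1}$; summing over $n$ gives \eqref{Over_left_combinatorial}. On the right, for each $n$ the triple $\tfrac{q^{n(n+1)/2}}{(q;q)_n}\cdot\tfrac{2q^{n+1}}{1-q^{2(n+1)}}\cdot\tfrac{(-q^{n+2};q)_\infty}{(q^{n+2};q)_\infty}$ encodes an ordinary partition containing each of $1,\dots,n$ at least once, together with the part $i:=n+1$ of odd multiplicity and an overpartition into parts $\geq i+1$. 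Merging these contributions (without any frequency collision) and stripping overbars produces a $\pi\in\U$ with $i\leq m(\pi)$ and $f_i(\pi)\equiv 1\pmod 2$. The weight accumulated from the last two factors is $2\cdot 2^{\nu_d(\pi,i+1)}=2^{\nu_d(\pi,i)}$, where the leading factor of $2$ accounts for whether the part $i$ is marked. Summing over the admissible indices $i$ yields the weight $\tau(\pi)$ and hence \eqref{Over_right_combinatorial}.

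The main obstacle is the last piece of bookkeeping: one must verify that each ordinary partition $\pi\in\U$ arises from exactly those indices $i\in\{1,\dots,m(\pi)\}$ for which $f_i(\pi)$ is odd, and that each such index contributes the multiplicative factor $2^{\nu_d(\pi,i)}$; the upper bound $m(\pi)$ is harmless because $f_{m(\pi)}=0$ forces the indicator $\chi(f_{m(\pi)}\equiv 1\pmod 2)$ to vanish. With this verified, the chain $\eqref{Over_left_combinatorial}=\mathrm{LHS}\,\eqref{Over_Analytical_Identity}=\mathrm{RHS}\,\eqref{Over_Analytical_Identity}=\eqref{Over_right_combinatorial}$ delivers \eqref{Over_Weighted_Identity}.
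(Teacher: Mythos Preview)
Your proposal is correct and follows essentially the same route as the paper: you invoke the analytic identity of Theorem~\ref{Over_Analytical_Theorem}, interpret the left-hand summand via rectangular overpartitions glued to overpartitions in fewer than $n$ parts to obtain \eqref{Over_left_combinatorial}, and interpret the right-hand summand by merging the staircase piece, the odd-frequency part $i=n+1$, and the overpartition tail to obtain \eqref{Over_right_combinatorial} with weight $2\cdot 2^{\nu_d(\pi,i+1)}=2^{\nu_d(\pi,i)}$. The only cosmetic difference is that you make the index bookkeeping (the range $1\le i\le m(\pi)$ and the vanishing at $i=m(\pi)$) a bit more explicit than the paper does.
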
 

There are 100 overpartitions of $8$. There are 18 overpartitions of 8 with an even smallest part. Hence, in the weighted count of the left-hand side of \eqref{Over_Weighted_Identity} the coefficient of $q^8$ term is $100-2\cdot18 = 64$. We exemplify the right-hand side weights of Theorem~\ref{Over_Weighted_Theorem} for the same norm in Table~\ref{Table_Over_Partitions_tau_weight}.

\begin{table}[htb]\caption{Example of Theorem~\ref{Over_Weighted_Theorem} with $|\pi|=8$.}\label{Table_Over_Partitions_tau_weight}
\begin{center}\vspace{-.5cm}
\[\begin{array}{cc|cc}
\pi\in\U  &  \tau(\pi) &  \pi\in\U  &  \tau(\pi) \\[-2ex]& &  \\
(1^3,2,3) 	& 8+4+2=14		&   (1^2,2,4) 		& 4		\\
(1,2,5)		& 8+4=12		&   (1^3,5) 		&  4		\\
(1,2^2,3)	& 8+2=10		&   (1,7) 		&	 4	\\
(1,3,4) 	& 8		&   (1^6,2)	 	&	2	\\
(1^5,3) 	& 4		&   (1^2,2^3)  		& 2	\\
\end{array}\]
The sum of the weights is 64, which is the same as the count of overpartitions with the alternating sign with respect to their smallest part's parity.
\end{center}
\end{table}

\section{A Weighted Identity with respect to the Smallest Part and the Number of Parts of a Partition in relation with Sums of Squares}\label{Section_s_v}

We start with a short proof of an analytic identity.
\begin{lemma}\label{RAMA_lemma}
\begin{equation}\label{RAMA_lemma_equation}
\sum_{n\geq 1} \frac{(-1)^n q^{n(n+1)/2}}{(1+q^n)(q;q)_n} = \sum_{n\geq 1} (-1)^n q^{n^2}.
\end{equation}
\end{lemma}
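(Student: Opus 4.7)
The plan is to identify the left side of \eqref{RAMA_lemma_equation} as $\tfrac{1}{2}(\varphi(-q)-1)$ and then invoke Gauss's identity \eqref{Theta_phi_prod}, which gives $\varphi(-q)=\sum_{n\in\mathbf{Z}}(-1)^n q^{n^2}$, to match the right-hand side.

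First, I would add back the $n=0$ term, whose value is $\tfrac12$. Since $(-1;q)_n=2(-q;q)_{n-1}$ for $n\geq 1$ (and the $n=0$ case works out trivially), one has $\frac{2}{1+q^n}=\frac{(-1;q)_n}{(-q;q)_n}$, which allows me to rewrite
\[
\sum_{n\geq 1}\frac{(-1)^n q^{n(n+1)/2}}{(1+q^n)(q;q)_n}+\frac{1}{2}=\frac{1}{2}\sum_{n\geq 0}\frac{(-1;q)_n}{(q;q)_n(-q;q)_n}(-1)^n q^{\binom{n}{2}}q^n=\frac{1}{2}\,{}_1\phi_1\!\left(\genfrac{}{}{0pt}{}{-1}{-q};q,q\right).
\]

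To evaluate this ${}_1\phi_1$, I would realize it as a $b\to\infty$ limit of a ${}_2\phi_1$. Using the standard asymptotic $(b;q)_n b^{-n}\to(-1)^n q^{\binom{n}{2}}$, we have
\[
{}_1\phi_1\!\left(\genfrac{}{}{0pt}{}{-1}{-q};q,q\right)=\lim_{b\to\infty}{}_2\phi_1\!\left(\genfrac{}{}{0pt}{}{-1,\,b}{-q};q,q/b\right),
\]
and for $a=-1$, $c=-q$ the argument $q/b$ equals $c/(ab)$, so the $q$-Gauss sum \eqref{q_Gauss} applies to give
\[
{}_2\phi_1\!\left(\genfrac{}{}{0pt}{}{-1,\,b}{-q};q,q/b\right)=\frac{(q;q)_\infty(-q/b;q)_\infty}{(-q;q)_\infty(q/b;q)_\infty}\;\longrightarrow\;\frac{(q;q)_\infty}{(-q;q)_\infty}=\varphi(-q)
\]
as $b\to\infty$. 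Combining the pieces gives $\sum_{n\geq 1}\tfrac{(-1)^n q^{n(n+1)/2}}{(1+q^n)(q;q)_n}=\tfrac{1}{2}(\varphi(-q)-1)=\sum_{n\geq 1}(-1)^n q^{n^2}$, as required. The only non-mechanical step is justifying the interchange of the $b\to\infty$ limit with the summation, but that is standard $q$-series practice (uniform convergence on compact subsets of $|q|<1$); the rest is algebraic rearrangement.
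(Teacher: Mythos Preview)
Your proof is correct and is essentially the same as the paper's: both recognize $1+2\sum_{n\ge1}\frac{(-1)^n q^{n(n+1)/2}}{(1+q^n)(q;q)_n}$ as a $b\to\infty$ (equivalently $\rho\to\infty$ with $b=\rho q$) limit of a ${}_2\phi_1$ with parameters $a=-1$, $c=-q$, $z=c/(ab)$, apply the $q$-Gauss sum \eqref{q_Gauss} to obtain $(q;q)_\infty/(-q;q)_\infty=\varphi(-q)$, and then read off the result from \eqref{Theta_phi_prod}. The only cosmetic difference is that you insert an explicit ${}_1\phi_1$ intermediate step, whereas the paper writes the limiting ${}_2\phi_1$ directly.
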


\begin{proof} It is easy to see that \begin{equation}\label{apply_q_Gauss_to_this}
1+2\sum_{n\geq 1} \frac{(-1)^n q^{n(n+1)/2}}{(1+q^n)(q;q)_n} = \lim_{\rho\rightarrow\infty} {}_2\phi_1 \left( \genfrac{}{}{0pt}{}{-1,\ \rho q}{-q};q, 1/\rho \right) = \frac{(q;q)_\infty}{(-q;q)_\infty},
\end{equation}
where we used $q$-Gauss sum \eqref{q_Gauss}. Rewiriting the sum in \eqref{Theta_phi_prod} as \begin{equation}\label{Theta_squares_rewritten}1+2\sum_{n\geq 1} (-1)^n q^{n^2}\end{equation} proves the claim.
\end{proof}

The identity \eqref{RAMA_lemma_equation} is a special case of a more general identity of Ramanujan \cite[E. 1.6.2, p. 25]{LostNotebook_2} which even has a combinatorial proof \cite{BerndtKimYee}. But, more relevant to this paper, Alladi \cite[Thm 2, p. 330]{Alladi_RAMA_identity} is the first one to give a combinatorial interpretation to the left-hand side of Therorem~\ref{RAMA_lemma} in the spirit of the Euler pentagonal number theorem. In his study, he interpreted the left-hand side sum as the number of partitions into distinct parts with smallest part being odd weighted with $+1$ or $-1$ depending on the number of parts of the partition being even or odd, respectively. In our notations:
\begin{theorem}[Alladi, 2009]\label{Alladi_squares_THM}Let $N$ be a positive integer. Then,\begin{equation}
\sum_{\substack{\pi\in\mathcal{D}_o,\\ |\pi|=N}} (-1)^{\nu(\pi)}  = (-1)^N \chi(N = \square\, ),
\end{equation}
where $\mathcal{D}_o$ is the set of non-empty partitions into distinct parts where the smallest part is odd, $\chi$ is as defined in \eqref{truth_function}, and $\square$ represents the statement ``a perfect integer square."
\end{theorem}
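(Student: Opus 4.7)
The plan is to combine Lemma~\ref{RAMA_lemma} with a generating function interpretation of the left-hand side of \eqref{RAMA_lemma_equation} in terms of the set $\mathcal{D}_o$, and then to read off the coefficient of $q^N$.

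First, I would rewrite the $n$-th summand on the left-hand side of \eqref{RAMA_lemma_equation}. Using $(q;q)_n=(q;q)_{n-1}(1-q^n)$ together with $(1-q^n)(1+q^n)=1-q^{2n}$ one obtains
$$\frac{q^{n(n+1)/2}}{(1+q^n)(q;q)_n}=\frac{q^{n(n+1)/2}}{(q;q)_{n-1}(1-q^{2n})}.$$
This form is tailor-made to be read as the generating function for partitions $\pi\in\mathcal{D}_o$ with $\nu(\pi)=n$. Writing the (odd) smallest part as $2m+1$ with $m\geq 0$, the remaining $n-1$ parts are distinct and strictly greater than $2m+1$; after the standard rectangle-plus-staircase shift $\lambda_i\mapsto\lambda_i-(2m+1)-(n-i)$, they correspond to an arbitrary partition $\mu$ into at most $n-1$ parts. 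A direct computation gives $|\pi|=n(2m+1)+n(n-1)/2+|\mu|$, so summing over $m\geq 0$ (producing the geometric factor $1/(1-q^{2n})$) and over $\mu$ with at most $n-1$ parts (producing $1/(q;q)_{n-1}$) yields precisely the displayed expression.

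Second, weighting each $n$-part contribution by $(-1)^n=(-1)^{\nu(\pi)}$ and summing over $n\geq 1$, Lemma~\ref{RAMA_lemma} collapses the resulting series to
$$\sum_{\pi\in\mathcal{D}_o}(-1)^{\nu(\pi)}q^{|\pi|}=\sum_{n\geq 1}(-1)^n q^{n^2}.$$
Extracting the coefficient of $q^N$ on both sides finishes the argument: the right-hand side contributes $(-1)^k$ if $N=k^2$ and $0$ otherwise, and since $k^2\equiv k\pmod 2$ we have $(-1)^{k^2}=(-1)^k$, so the common value is $(-1)^N\chi(N=\square\,)$.

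The only potentially delicate step is the combinatorial identification in the first paragraph, but this amounts to a routine two-variable reorganisation of the generating function for partitions into distinct parts and requires only elementary bookkeeping rather than a non-trivial bijection. Everything else is a direct appeal to the lemma and a parity remark.
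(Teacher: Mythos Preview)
Your proof is correct and follows exactly the route the paper indicates: combine Lemma~\ref{RAMA_lemma} with the combinatorial interpretation of its left-hand side as a $(-1)^{\nu(\pi)}$-weighted sum over $\mathcal{D}_o$, then extract coefficients. The paper does not spell out this interpretation but simply attributes it to Alladi; your rectangle-plus-staircase decomposition makes that step explicit, and the parity remark $(-1)^k=(-1)^{k^2}$ is the right way to match the sign on the right-hand side.
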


It is easy to check that \begin{equation*}
|\pi|\equiv \nu_o(\pi) \mod{2}
,\end{equation*} for any partition $\pi$. Hence, \begin{equation}\label{parity_equivalence}
\nu(\pi) - |\pi| \equiv \nu_e(\pi) \mod{2}.
\end{equation} This enables us to rewrite Theorem~\ref{Alladi_squares_THM} as in \cite{BessenrodtPak}.

\begin{theorem}[Bessenrodt, Pak, 2004]\label{Alladi_squares_simplified}Let $N$ be a positive integer. Then,\begin{equation}
\sum_{\substack{\pi\in\mathcal{D}_o,\\ |\pi|=N}} (-1)^{\nu_e(\pi)}  = \chi(N = \square\, ).
\end{equation}
\end{theorem}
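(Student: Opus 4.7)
\medskip

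The plan is to derive Theorem~\ref{Alladi_squares_simplified} directly from Theorem~\ref{Alladi_squares_THM} by applying the parity equivalence \eqref{parity_equivalence}. Since both theorems sum over the same set $\{\pi\in\mathcal{D}_o : |\pi|=N\}$ and differ only in the exponent on $-1$ (either $\nu(\pi)$ or $\nu_e(\pi)$), the key observation is that $\nu(\pi)$ and $\nu_e(\pi)$ differ modulo $2$ by $|\pi|=N$, which is a constant across the sum. I expect this to be a short algebraic manipulation with no real obstacle, since Theorem~\ref{Alladi_squares_THM} already does the combinatorial work.

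First I would recall \eqref{parity_equivalence}, which reads $\nu(\pi)-|\pi|\equiv \nu_e(\pi)\pmod 2$, or equivalently
$$(-1)^{\nu(\pi)} = (-1)^{|\pi|}\,(-1)^{\nu_e(\pi)}.$$
Since every partition appearing in the sum satisfies $|\pi|=N$, I can pull $(-1)^N$ out of the summation to obtain
$$\sum_{\substack{\pi\in\mathcal{D}_o,\\ |\pi|=N}} (-1)^{\nu(\pi)} = (-1)^N \sum_{\substack{\pi\in\mathcal{D}_o,\\ |\pi|=N}} (-1)^{\nu_e(\pi)}.$$

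Next I would invoke Theorem~\ref{Alladi_squares_THM}, whose right-hand side equals $(-1)^N \chi(N=\square)$. Setting the two expressions equal yields
$$(-1)^N \sum_{\substack{\pi\in\mathcal{D}_o,\\ |\pi|=N}} (-1)^{\nu_e(\pi)} = (-1)^N \chi(N=\square),$$
and dividing by the nonzero factor $(-1)^N$ finishes the proof. The only subtlety worth double-checking is the parity identity \eqref{parity_equivalence} itself: every odd part contributes $1$ modulo $2$ to both $\nu(\pi)$ and $|\pi|$, while every even part contributes $1$ to $\nu(\pi)$ and $0$ to $|\pi|$, so $\nu(\pi)-|\pi|\equiv \nu_e(\pi)\pmod 2$ follows immediately, and no further work is needed.
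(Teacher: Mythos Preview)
Your proposal is correct and matches the paper's own argument exactly: the paper introduces \eqref{parity_equivalence} precisely in order to ``rewrite Theorem~\ref{Alladi_squares_THM}'' as Theorem~\ref{Alladi_squares_simplified}, and your factoring out of $(-1)^N$ is the intended one-line deduction.
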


There, they also discussed a refinement of Theorem~\ref{Alladi_squares_simplified}. 

\begin{theorem}[Bessenrodt, Pak, 2004]\label{Bessenrodt_Pak_THM}\begin{equation}
\sum_{\substack{\pi\in\mathcal{D}_o,\\ |\pi|=N,\\ \nu_o(\pi)=k}} (-1)^{\nu_e(\pi)} =  \chi(N=k^2).\end{equation}
\end{theorem}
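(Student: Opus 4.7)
The plan is to deduce Theorem~\ref{Bessenrodt_Pak_THM} from the bivariate generating function identity
\begin{equation*}
\sum_{\pi \in \mathcal{D}_o} (-1)^{\nu_e(\pi)} z^{\nu_o(\pi)} q^{|\pi|} = \sum_{k \geq 1} z^k q^{k^2},
\end{equation*}
which is a $z$-refinement of the univariate statement behind Theorem~\ref{Alladi_squares_simplified}. Extracting the coefficient of $z^k q^N$ on both sides yields the claim immediately.

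To prove this bivariate identity, I would first give a bijective decomposition that peels off the ``staircase'' contribution $k^2$. Given $\pi \in \mathcal{D}_o$ with $\nu_o(\pi) = k$, list its odd parts in decreasing order $a_1 > a_2 > \cdots > a_k$. Distinctness of these odd parts forces $a_i \geq 2(k-i) + 1$, so $c_i := (a_i - (2(k-i)+1))/2$ is a nonnegative integer and $c_1 \geq c_2 \geq \cdots \geq c_k \geq 0$. Every even part of $\pi$ exceeds the smallest odd part $a_k = 2c_k + 1$ and is even, hence equals $2d$ for some integer $d \geq c_k + 1$; collect these as a set $D$ of distinct positive integers. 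The map $\pi \leftrightarrow (c, D)$ is a bijection satisfying $|\pi| = k^2 + 2|c| + 2|D|$ and $\nu_e(\pi) = |D|$.

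With this bijection, the bivariate generating function factors as $\sum_{k \geq 1} z^k q^{k^2} S_k(q^2)$, where
\begin{equation*}
S_k(Q) := \sum_{(c, D)} (-1)^{|D|} Q^{|c| + |D|}
\end{equation*}
ranges over all admissible pairs. It then suffices to prove $S_k(Q) \equiv 1$ for every $k \geq 1$. Stratifying by $t := c_k$, the ``top piece'' $(c_1 - t, \ldots, c_{k-1} - t)$ is an arbitrary partition into at most $k-1$ parts, contributing $Q^{kt}/(Q;Q)_{k-1}$ after summation, while summing $(-1)^{|D|} Q^{|D|}$ over $D \subseteq \{t+1, t+2, \ldots\}$ yields $(Q^{t+1}; Q)_\infty$. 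Collecting these,
\begin{equation*}
S_k(Q) = \frac{(Q;Q)_\infty}{(Q;Q)_{k-1}} \sum_{t \geq 0} \frac{Q^{kt}}{(Q;Q)_t},
\end{equation*}
and the $q$-binomial theorem \eqref{q_binomial} (with $a = 0$, $z = Q^k$, and $q$ replaced by $Q$) collapses the inner sum to $1/(Q^k;Q)_\infty$. Since $(Q;Q)_{k-1}(Q^k;Q)_\infty = (Q;Q)_\infty$, we obtain $S_k(Q) = 1$.

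The main obstacle is conceptual rather than computational: recognizing that the staircase $\sigma_k := (2k-1, 2k-3, \ldots, 3, 1)$ is the unique minimal-norm element of $\{\pi \in \mathcal{D}_o : \nu_o(\pi) = k\}$, so that Theorem~\ref{Bessenrodt_Pak_THM} really says all deviations from $\sigma_k$ cancel in sign, and then setting up the bijection above so that the ``excess'' parameters $(c, D)$ are independent of $k^2$. Once the bijection is in hand, the remaining work is routine $q$-series manipulation using only \eqref{q_binomial}. An alternative combinatorial route would be to construct a sign-reversing involution on $\{\pi \in \mathcal{D}_o : \nu_o(\pi) = k\}$ with $\sigma_k$ as its unique fixed point; this is the direction pursued by Bessenrodt and Pak in \cite{BessenrodtPak}.
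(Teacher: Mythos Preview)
Your argument is correct. The bijection $\pi \leftrightarrow (c,D)$ is well-defined in both directions (the only constraint linking the odd and even parts of $\pi\in\mathcal{D}_o$ is that every even part exceeds the smallest odd part $a_k$, which is exactly your condition $d>c_k$), the weight and norm accounting is right, and the evaluation $S_k(Q)=1$ via \eqref{q_binomial} goes through as written.

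As for comparison: the paper does not supply its own proof of Theorem~\ref{Bessenrodt_Pak_THM}. It is merely quoted from \cite{BessenrodtPak} as a known refinement of Theorem~\ref{Alladi_squares_simplified}, so there is nothing in the paper to match your argument against. Your generating-function route (peel off the odd staircase $1+3+\cdots+(2k-1)=k^2$, then show the residual signed sum over $(c,D)$ collapses to $1$ by the $q$-binomial theorem) is a clean analytic alternative to the sign-reversing involution that Bessenrodt and Pak construct directly; you correctly flag this at the end of your write-up. Either approach establishes the result, and yours has the virtue of staying within the $q$-series toolkit already set up in Section~\ref{Section_def}.
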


Theorems~\ref{Alladi_squares_THM}$\,$--$\,$\ref{Bessenrodt_Pak_THM} connect the weighted count of the partitions into distinct parts, where the smallest part is necessarily odd and the number of representation of integer as a perfect square. Our next theorem will be connecting the weighted count of partitions and the number of representations of a number as a sum of two squares.

\begin{theorem}\label{Over_S_V_analytic_theorem}\begin{equation}
\label{Over_S_V_analytic_identity}
\sum_{n\geq 1} \frac{(-1)^n 2 q^n}{1+q^n}\frac{(-q;q)_{n-1}}{(q;q)_{n-1}} = \varphi(-q)^2 - \varphi(-q)
\end{equation}
\end{theorem}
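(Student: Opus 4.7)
\bigskip

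\noindent\textbf{Proof proposal.} The left-hand side of \eqref{Over_S_V_analytic_identity} is structurally almost the same as the left-hand side of \eqref{Over_Analytical_Identity}; the only change is the extra $(-1)^n$. This suggests running the same machinery used in the proof of Theorem~\ref{Over_Analytical_Theorem} (replace $q$ by $-q$ in the argument of the $q$-hypergeometric series), and then coaxing the resulting ${}_2\phi_2$ into a form where Lemma~\ref{RAMA_lemma} applies to deliver the $\varphi(-q)$'s.

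First, I would mimic the opening of the proof of Theorem~\ref{Over_Analytical_Theorem}. Multiplying and dividing the summand of the left-hand side of \eqref{Over_S_V_analytic_identity} by $1+q$, using \eqref{factorial_simplification}, and shifting $n\mapsto n+1$ rewrites the sum as
\begin{equation*}
\sum_{n\geq 1} \frac{(-1)^n 2 q^n}{1+q^n}\frac{(-q;q)_{n-1}}{(q;q)_{n-1}}
=-\frac{2q}{1+q}\,{}_2\phi_1\!\left(\genfrac{}{}{0pt}{}{-q,\,-q}{-q^2};q,-q\right).
\end{equation*}
So the only new input at this stage is that the argument $z$ inside the ${}_2\phi_1$ is $-q$ rather than $q$.

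Next, I would apply Jackson's transformation \eqref{Jackson_transformation} with $a=b=-q$, $c=-q^2$, $z=-q$. The bookkeeping gives $az=q^2$, $c/b=q$, $bz=q^2$, and
\begin{equation*}
{}_2\phi_1\!\left(\genfrac{}{}{0pt}{}{-q,-q}{-q^2};q,-q\right)
=\frac{(q^2;q)_\infty}{(-q;q)_\infty}\,{}_2\phi_2\!\left(\genfrac{}{}{0pt}{}{-q,\,q}{-q^2,\,q^2};q,q^2\right)
=\frac{\varphi(-q)}{1-q}\,{}_2\phi_2\!\left(\genfrac{}{}{0pt}{}{-q,\,q}{-q^2,\,q^2};q,q^2\right),
\end{equation*}
using \eqref{Theta_phi_prod} to identify $(q;q)_\infty/(-q;q)_\infty$ as $\varphi(-q)$.

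The main work is then to recognize the resulting ${}_2\phi_2$. Writing it as a series and simplifying the Pochhammer quotients via the elementary telescoping identities $(-q;q)_n/(-q^2;q)_n=(1+q)/(1+q^{n+1})$ and $(q^2;q)_n=(q;q)_{n+1}/(1-q)$, the summand collapses to
\begin{equation*}
{}_2\phi_2\!\left(\genfrac{}{}{0pt}{}{-q,\,q}{-q^2,\,q^2};q,q^2\right)
=-\frac{1-q^2}{q}\sum_{n\ge 1}\frac{(-1)^n q^{n(n+1)/2}}{(1+q^n)(q;q)_n},
\end{equation*}
after a shift $n\mapsto n-1$ and collecting the exponent $\binom{n-1}{2}+2(n-1)=n(n+1)/2-1$. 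This is where Lemma~\ref{RAMA_lemma} enters: the right-hand side of the last display equals $-\frac{1-q^2}{q}\sum_{n\ge 1}(-1)^nq^{n^2}=-\frac{1-q^2}{2q}(\varphi(-q)-1)$ by \eqref{Theta_phi_prod}. Threading this back through the two factors out front produces
\begin{equation*}
-\frac{2q}{1+q}\cdot\frac{\varphi(-q)}{1-q}\cdot\Bigl(-\frac{1-q^2}{2q}\Bigr)\bigl(\varphi(-q)-1\bigr)
=\varphi(-q)^2-\varphi(-q),
\end{equation*}
which is the desired identity. The main obstacle is purely bookkeeping: the index shift and exponent simplification inside the ${}_2\phi_2$ must be done precisely so that the tail lines up with the sum appearing in Lemma~\ref{RAMA_lemma}.
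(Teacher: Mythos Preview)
Your proposal is correct and follows essentially the same route as the paper's own proof: rewrite the left side as $-\frac{2q}{1+q}\,{}_2\phi_1\bigl(\begin{smallmatrix}-q,-q\\-q^2\end{smallmatrix};q,-q\bigr)$, apply Jackson's transformation \eqref{Jackson_transformation}, simplify the resulting ${}_2\phi_2$ (via the index shift and the telescoping you describe) to isolate the sum $\sum_{n\ge1}\frac{(-1)^n q^{n(n+1)/2}}{(1+q^n)(q;q)_n}$, invoke Lemma~\ref{RAMA_lemma}, and then recognize $\varphi(-q)$ via \eqref{Theta_phi_prod}. The only cosmetic difference is that you pull out the factor $\varphi(-q)/(1-q)$ immediately after Jackson, whereas the paper keeps $(q^2;q)_\infty/(-q;q)_\infty$ and inserts the $1-q$ at the end; the computations are identical.
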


\begin{proof} Similar to the proof of Theorem~\ref{Ordinary_Partitions_Analytic_Theorem} we would like to write the left-hand side of \eqref{Over_S_V_analytic_identity} as a hypergeometric function first. On the left-hand side of \eqref{Over_S_V_analytic_identity} we multiply and divide the summand by $(1+q)$, use \eqref{factorial_simplification}, factor out the terms $-2q/(1+q)$, and finally shift the summation variable $n\mapsto n+1$ to write the expression as a ${}_2\phi_1$ hypergeometric series. Applying the Jackson's transformation \eqref{Jackson_transformation} to this expression yields \begin{equation}
\frac{-2q}{1+q} {}_2\phi_1 \left(\genfrac{}{}{0pt}{}{-q,-q}{-q^2};q,-q\right) = \frac{-2q}{1+q}\frac{(q^2;q)_\infty}{(-q;q)_\infty} {}_2\phi_2 \left(\genfrac{}{}{0pt}{}{-q,q}{-q^2,q^2};q,q^2\right).
\end{equation}
Writing the ${}_2\phi_2$ explicitly, distributing the factor q/(1+q), performing the simple cancellations, shifting the summation variable $n\mapsto n-1$ and multiplying and dividing with $1-q$ we get \begin{equation}\label{apply_RAMA_to_this}
\frac{-2q}{1+q}\frac{(q^2;q)_\infty}{(-q;q)_\infty} {}_2\phi_2 \left(\genfrac{}{}{0pt}{}{-q,q}{-q^2,q^2};q,q^2\right) = 2\frac{(q;q)_\infty}{(-q;q)_\infty} \sum_{n\geq 1} \frac{(-1)^{n} q^{n(n+1)/2}}{(1+q^{n})(q;q)_{n}}.
\end{equation}
Applying Lemma~\ref{RAMA_lemma} to the right-hand side of \eqref{apply_RAMA_to_this} and rewriting the identity we see that 
\begin{equation}\label{last_line_of_over_s_v_proof}
\sum_{n\geq 1} \frac{(-1)^n 2 q^n}{1+q^n}\frac{(-q;q)_{n-1}}{(q;q)_{n-1}} =  2\frac{(q;q)_\infty}{(-q;q)_\infty} \left(-1 + \sum_{n\geq 0} (-1)^nq^{n^2} \right).
\end{equation} Using observations \eqref{Theta_squares_rewritten} and \eqref{Theta_phi_prod} on the right-hand side of \eqref{last_line_of_over_s_v_proof} we complete the proof.
\end{proof}

The combinatorial interpretation of Theorem~\ref{Over_S_V_analytic_theorem} combines a weighted partition count with a representation of numbers by sum of two squares. Moreover, we can provide an explicit formula for the weighted count of partitions with respect to the norm.

Let $n$ be a positive integer. The summand \[\frac{(-1)^n 2 q^n}{1+q^n}\] of \eqref{Over_S_V_analytic_identity} is the generating function for the number of partitions of the form $(k^n)$ (keeping \eqref{Ordinary_Analytic_left_weighted_bit} in mind) gets counted with the weight $ (-1)^{k+n+1}2$. Here it should be noted that $k$ is the smallest part and $n$ is the number of parts of this partition. After the needed addition of partitions (similar to the ones we did for Theorems \eqref{Ordinary_Partitions_Combinatorial_Weighted_Theorem} and \eqref{Over_Weighted_Theorem}) these two variables are going to stay the same for the outcome partition. To have a uniform notation, recall that $\nu(\pi)$ denotes the number of parts, and $\nu_d(\pi)$ is the number of different parts of a partition $\pi$. The second summand \begin{equation}\tag{\ref{overpartitions_factor_less_than_n}} \frac{(-q;q)_{n-1}}{(q;q)_{n-1}} \end{equation} that appears in \eqref{Over_S_V_analytic_identity} is the generating function for the number of overpartitions into strictly less than $n$ parts as mentioned before. We know that this is the same as counting the number of ordinary partitions $\pi$ in less than $n$ parts counted with the weight $2^{\nu_d(\pi)}$ by \eqref{over_connect_regular_abstract}. 

Putting together the partition $\pi_1=(k^n)$ and a partition $\pi_2$ counted by \eqref{overpartitions_factor_less_than_n} (similar to the way we did in Table~\ref{Ferrers_adding}) gives us an outcome overpartition $\pi$. The partition $\pi$ has the properties $s(\pi)=k$, $\nu(\pi)=n$, and $\nu_d(\pi) = \nu_d(\pi_1)+\nu_d(\pi_2)=\nu_d(\pi_2)+1$. This partition is counted with the weight\begin{equation}
\label{weight_for_s_v} \omega(\pi):=(-1)^{s(\pi)+\nu(\pi)+1} 2^{\nu_d(\pi)},
\end{equation} (the multiplication of weights of $\pi$'s  generators) by the right-hand side of \eqref{Over_S_V_analytic_identity}. This proves \begin{equation}\label{over_s_v_left_side}
\sum_{n\geq 1} \frac{(-1)^n 2 q^n}{1+q^n}\frac{(-q;q)_{n-1}}{(q;q)_{n-1}} = \sum_{\pi\in\U} \omega(\pi) q^{|\pi|}.
\end{equation}

On the other side of the equation \eqref{Over_S_V_analytic_identity} we have the difference of two theta series. The summation of \eqref{Theta_phi_prod} is enough to see that \begin{equation}\label{varphi_explanation}\varphi(-q)^2 - \varphi(-q) = \sum_{x,y \in \mathbb{Z}} (-1)^{x+y} q^{x^2+y^2} - \sum_{n=-\infty}^\infty (-1)^n q^{n^2}.\end{equation} 
Let $r_2(N)$ be the number of representations of $N$ as a sum of two squares. Any positive integer $N$ has the unique prime factorization \[N = 2^e\prod_{i\geq 1} p_i^{v_i} \prod_{j\geq 1} q_j^{w_j},\] where $e$, $v_i$, and $w_j$ are non-negative integers, and $p_i$ and $q_j$ are primes 1 and 3 mod 4, respectively. It is known \cite[Thm 14.13, p. 572]{Rosen} that \begin{equation}\label{r_2}
r_2(N) = 4 \prod_{i\geq 1} (1+v_i) \prod_{j\geq 1} \frac{1+(-1)^{w_j}}{\tiny 2}.
\end{equation}
Writing the first series organized with respect to $r_2$, rewriting the second series, and finally cancelling the constant terms of both series we get \begin{equation}\label{collect_the_squares}
\sum_{x,y \in \mathbb{Z}} (-1)^{x+y} q^{x^2+y^2} - \sum_{n=-\infty}^\infty (-1)^nq^{n^2} = \sum_{N\geq 1} (-1)^N r_2(N)q^N - 2\sum_{n\geq 1} (-1)^n q^{n^2}.
\end{equation} On the right-hand side of \eqref{collect_the_squares} one can collect the terms with respect to the exponents of $q$. Writing the two series together with the use of a truth function and comparing  \eqref{varphi_explanation} and \eqref{collect_the_squares} yields the identity \begin{equation}\label{over_s_v_right_side}
\varphi(-q)^2 - \varphi(-q) =  \sum_{N\geq 1} (-1)^N ( r_2(N) - 2\chi(N=\square)\, )q^N, 
\end{equation} where $\chi$ is defined as in \eqref{truth_function} and $\square$ represents ``a perfect integer square."

Now we put the right-hand sides of \eqref{weight_for_s_v}, \eqref{over_s_v_left_side} and \eqref{over_s_v_right_side} together and get an explicit expression for the sum of weights $\omega(\pi)$ of partitions for a fixed positive norm $N$:
\begin{equation}\label{Explicit_weights}
\sum_{\substack{\pi\in\U,\\ |\pi|=N}} (-1)^{s(\pi)+\nu(\pi)+1} 2^{\nu_d(\pi)} = (-1)^N ( r_2(N) - 2\chi(N=\square)\, ).\end{equation}

We can employ the observation \eqref{parity_equivalence} to simplify \eqref{Explicit_weights}.

\begin{theorem}\label{Explicit_weights_theorem}\begin{equation}
\sum_{\substack{\pi\in\U,\\ |\pi|=N}} \omega^*(\pi) = r_2(N) - 2\chi(N=\square),\end{equation} where \[\omega^*(\pi) = (-1)^{s(\pi)+\nu_e(\pi)+1} 2^{\nu_d(\pi)}. \]
\end{theorem}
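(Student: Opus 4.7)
The plan is to derive Theorem~\ref{Explicit_weights_theorem} as an immediate corollary of the already-established identity \eqref{Explicit_weights}, using the simple parity observation \eqref{parity_equivalence}. The entire content of the theorem is the replacement of the exponent $\nu(\pi)$ by $\nu_e(\pi)$, compensated by absorbing the sign $(-1)^N$ on the right-hand side of \eqref{Explicit_weights}, which is made possible because the sum ranges only over partitions with a fixed norm $|\pi|=N$.

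Concretely, I would proceed as follows. First, fix $N\geq 1$ and restrict attention to $\pi\in\U$ with $|\pi|=N$. By \eqref{parity_equivalence}, for every such $\pi$ we have
\begin{equation*}
\nu(\pi) \equiv N + \nu_e(\pi) \pmod{2},
\end{equation*}
and therefore
\begin{equation*}
(-1)^{s(\pi)+\nu(\pi)+1} = (-1)^N\,(-1)^{s(\pi)+\nu_e(\pi)+1}.
\end{equation*}
Substituting this into the left-hand side of \eqref{Explicit_weights} and pulling the constant sign $(-1)^N$ out of the sum, the identity \eqref{Explicit_weights} becomes
\begin{equation*}
(-1)^N \sum_{\substack{\pi\in\U\\|\pi|=N}} (-1)^{s(\pi)+\nu_e(\pi)+1} 2^{\nu_d(\pi)}
= (-1)^N\bigl(r_2(N)-2\chi(N=\square)\bigr).
\end{equation*}
Cancelling the common factor $(-1)^N$ and recognizing the summand on the left as $\omega^*(\pi)$ yields exactly the claimed identity.

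Since \eqref{Explicit_weights} has already been proved by combining the analytic identity of Theorem~\ref{Over_S_V_analytic_theorem} with the combinatorial interpretations \eqref{over_s_v_left_side} and \eqref{over_s_v_right_side}, and since \eqref{parity_equivalence} is a trivial congruence (the norm of a partition has the same parity as the number of its odd parts, so $\nu(\pi)-|\pi|=\nu_e(\pi)+\nu_o(\pi)-|\pi|\equiv \nu_e(\pi)\pmod 2$), there is really no genuine obstacle here: the proof is a single parity bookkeeping step. If there is any subtlety at all, it is only the conceptual point that the cleaner weight $\omega^*(\pi)$ arises naturally once one observes that $s(\pi)+\nu(\pi)$ carries a piece that depends only on $|\pi|$ and can be absorbed globally, leaving a statistic ($s(\pi)+\nu_e(\pi)$) that depends only on the internal structure of $\pi$ in a more transparent way.
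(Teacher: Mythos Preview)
Your proof is correct and is precisely the paper's approach: the paper states that one can ``employ the observation \eqref{parity_equivalence} to simplify \eqref{Explicit_weights},'' which is exactly the parity bookkeeping you carry out to replace $\nu(\pi)$ by $\nu_e(\pi)$ and cancel the global sign $(-1)^N$.
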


Two examples of Theorem~\ref{Explicit_weights_theorem} are given in Table~\ref{Table_Over_S_V}.

\begin{table}[htb]\caption{Examples of Theorem~\ref{Explicit_weights_theorem} with $|\pi|=4$ and $5$.}\label{Table_Over_S_V}
\begin{center}\vspace{-.5cm}
\[\begin{array}{ccc||cc}
&\pi\in\U,\ |\pi|=4  &  \omega^*(\pi) &  \pi\in\U,\ |\pi|=5 &  \omega^*(\pi) \\[-2ex]& &  \\
&(4) 	& 	2	&   (5)			& 	2		\\
&(2^2) 	&	-2	&	(2,3)		&	2^2	\\
&(1,3)	& 	2^2	&   (1,4) 		&  	-2^2		\\
&(1^2,2)	& -2^2	&   (1^2,3) 	&	2^2 		\\
&(1^4) 	& 	2	&   (1,2^2)	 	&	2^2		\\
&	 	& 		&   (1^3,2)  	& 	-2^2		\\
&		&		&	(1^5)		&	2		\\ \hline
\text{Total:}&		&  2 & & 8\\
\end{array}\]
and the explicit formula of \eqref{Explicit_weights} suggests:
\begin{align*}
&(r_2(4) -2 \cdot 1) = 4 -2 = 2, \\
&(r_2(5) - 2\cdot 0) = 8.
\end{align*}
\end{center}
\end{table}

Another equivalent statement of Theorem~\ref{Explicit_weights_theorem} can be given over the set of overpartitions by evaluating \eqref{weight_for_s_v} and \eqref{over_connect_regular_abstract}.

\begin{theorem}\label{Over_written_squares_THM}\begin{equation*}
\sum_{\substack{\pi\in\mathcal{O},\\ |\pi|=N}} (-1)^{s(\pi)+\nu_e(\pi)+1} =  r_2(N) - 2\chi(N=\square).\end{equation*}
\end{theorem}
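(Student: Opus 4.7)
The plan is to deduce Theorem~\ref{Over_written_squares_THM} from Theorem~\ref{Explicit_weights_theorem} via a statistic-refined version of the weighted equivalence \eqref{over_connect_regular_abstract} between overpartitions and ordinary partitions. Since Theorem~\ref{Explicit_weights_theorem} has already been established analytically, no further use of Jackson's transformation or of sum-of-two-squares machinery will be needed; the only real work is combinatorial bookkeeping.

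First I would recall the standard bijection underlying \eqref{over_connect_regular_abstract}: given an ordinary partition $\pi\in\U$, each of its $\nu_d(\pi)$ distinct parts may independently have its last occurrence overlined or not, producing $2^{\nu_d(\pi)}$ overpartitions, all of which share the same underlying multiset of parts as $\pi$. I would then point out that this bijection is trivially compatible with the two statistics appearing in the weight of Theorem~\ref{Over_written_squares_THM}: marking a part with an overbar does not change its numerical value, so every overpartition in the fiber above $\pi$ has the same norm $|\pi|$, the same smallest part $s(\pi)$, and the same number of even parts $\nu_e(\pi)$ as $\pi$.

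Consequently, multiplying both sides of \eqref{over_connect_regular_abstract} by the sign $(-1)^{s+\nu_e+1}$ and summing over partitions of a fixed norm $N$ yields the refined identity
\begin{equation*}
\sum_{\substack{\bar\pi\in\mathcal{O}\\ |\bar\pi|=N}} (-1)^{s(\bar\pi)+\nu_e(\bar\pi)+1}
\;=\; \sum_{\substack{\pi\in\U\\ |\pi|=N}} 2^{\nu_d(\pi)}(-1)^{s(\pi)+\nu_e(\pi)+1}
\;=\; \sum_{\substack{\pi\in\U\\ |\pi|=N}} \omega^*(\pi).
\end{equation*}
Theorem~\ref{Explicit_weights_theorem} identifies the rightmost sum with $r_2(N)-2\chi(N=\square)$, which is precisely the claim.

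I do not foresee a genuine obstacle. The mildest point requiring care is making explicit that the correspondence behind \eqref{over_connect_regular_abstract} preserves the statistics $s$ and $\nu_e$ (and not merely the norm), so that the weight $(-1)^{s+\nu_e+1}$ may be pulled across the bijection term by term; this is transparent from the definition of overlining, but should be stated so the reader sees why the reduction to Theorem~\ref{Explicit_weights_theorem} is legitimate.
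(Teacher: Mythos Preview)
Your proposal is correct and mirrors the paper's own argument: the paper states that Theorem~\ref{Over_written_squares_THM} is an ``equivalent statement of Theorem~\ref{Explicit_weights_theorem}\dots over the set of overpartitions by evaluating \eqref{weight_for_s_v} and \eqref{over_connect_regular_abstract},'' i.e., it passes from $\U$ to $\mathcal{O}$ via the $2^{\nu_d}$-to-$1$ correspondence just as you do. Your added remark that the bijection preserves $s$ and $\nu_e$ (so the sign factor transfers term by term) is exactly the point the paper leaves implicit.
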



\section{Some Weighted Identities for Partitions with Distinct Even Parts}\label{Section_P_ped}

Let $\mathcal{P}$ denote the set of non-empty partitions with distinct even parts. A partition $\pi\in \P$ may still have repeated odd parts. This set has been studied before in \cite[$\S$ 5]{Alladi_Lebesgue}, \cite{Andrews_DistinctEvens} and \cite{Andrews_Hirschhorn_Sellers}. 


We start with the analytic identity:
\begin{theorem}\label{S_function_for_P_ped_THM}
\begin{equation}\label{S_function_for_P_ped_EQN}
\sum_{n\geq 1} \frac{(-1)^n q^{2n}}{1-q^{2n}}\frac{(-q;q^2)_{n-1}}{(q^2,q^2)_{n-1}}q^{n-1} = \psi(-q) - \frac{\tiny 1}{1+q}.
\end{equation}
\end{theorem}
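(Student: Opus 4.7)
The plan is to reuse the Jackson-then-$q$-Gauss strategy employed for Theorem~\ref{Ordinary_Partitions_Analytic_Theorem} and Lemma~\ref{RAMA_lemma}. First I would collapse the two adjacent factorials in the summand via $(1-q^{2n})(q^2;q^2)_{n-1}=(q^2;q^2)_n$, shift $n\mapsto n+1$, and extract the factor $1-q^2$ to write the left-hand side of \eqref{S_function_for_P_ped_EQN} as
\begin{equation*}
-\frac{q^2}{1-q^2}\, {}_2\phi_1\!\left(\genfrac{}{}{0pt}{}{-q,\,q^2}{q^4};q^2,-q^3\right).
\end{equation*}

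Next I would apply Jackson's transformation \eqref{Jackson_transformation} at base $q^2$, exploiting the symmetry of the upper parameters by choosing $a=q^2$, $b=-q$, $c=q^4$, $z=-q^3$, so that $az=-q^5$ and $bz=q^4$. The transformation prefactor $(-q^5;q^2)_\infty/(-q^3;q^2)_\infty = 1/(1+q^3)$ then cancels the telescoping ratio $(-q^3;q^2)_n/(-q^5;q^2)_n = (1+q^3)/(1+q^{2n+3})$ that appears in the resulting ${}_2\phi_2$. Substituting back into the displayed formula above and re-indexing $n\mapsto n-1$, the left-hand side collapses to
\begin{equation*}
\sum_{n\geq 1}\frac{(-1)^n q^{n(n+1)}}{(q^2;q^2)_n(1+q^{2n+1})}.
\end{equation*}

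To finish, I would evaluate the companion sum $\Sigma:=\sum_{n\geq 0}(-1)^nq^{n(n+1)}/[(q^2;q^2)_n(1+q^{2n+1})]$ by a limiting $q$-Gauss argument in the spirit of Lemma~\ref{RAMA_lemma}. The telescoping identity $(-q;q^2)_n/(-q^3;q^2)_n=(1+q)/(1+q^{2n+1})$ together with the elementary limit $\lim_{\rho\to\infty}(\rho q^2;q^2)_n\rho^{-n}=(-1)^n q^{n(n+1)}$ identifies $(1+q)\Sigma$ as $\lim_{\rho\to\infty}{}_2\phi_1(-q,\rho q^2;-q^3;q^2,1/\rho)$. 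Since $c/(ab)=1/\rho$ in this ${}_2\phi_1$, the $q$-Gauss sum \eqref{q_Gauss} evaluates it to $(q^2;q^2)_\infty(-q/\rho;q^2)_\infty/[(-q^3;q^2)_\infty(1/\rho;q^2)_\infty]$, whose $\rho\to\infty$ limit is $(q^2;q^2)_\infty/(-q^3;q^2)_\infty$. Using $(1+q)(-q^3;q^2)_\infty=(-q;q^2)_\infty$ together with the Gauss product \eqref{Theta_psi_product} applied at $-q$ yields $\Sigma=\psi(-q)$, and peeling off the $n=0$ term $1/(1+q)$ produces the claimed right-hand side.

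The main obstacle is the bookkeeping in the Jackson step: one has to orient $(a,b)$ so that the transformation prefactor cancels exactly with the telescoping factor inside the resulting ${}_2\phi_2$. Only with this choice does the output take the shape $\sum(-1)^nq^{n(n+1)}/[(q^2;q^2)_n(1+q^{2n+1})]$ needed to match the limiting $q$-Gauss sum; the opposite assignment $a=-q$, $b=q^2$ produces a ${}_2\phi_2$ with a quadratic $q^{n^2+4n}$ factor that does not visibly telescope.
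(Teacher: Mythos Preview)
Your argument is correct, but it takes a considerably longer path than the paper's. The paper notices that after multiplying \eqref{S_function_for_P_ped_EQN} through by $1+q$ and adding $1$, the left-hand side becomes literally a ${}_1\phi_0$: since $(1+q)\,q^{n-1}(-q;q^2)_{n-1}=q^{n}(-1/q;q^2)_n$ and $(1-q^{2n})(q^2;q^2)_{n-1}=(q^2;q^2)_n$, one obtains $\sum_{n\ge 0}\frac{(-1/q;q^2)_n}{(q^2;q^2)_n}(-q^3)^n$, which the $q$-binomial theorem \eqref{q_binomial} sums in one stroke to $(q^2;q^2)_\infty/(-q^3;q^2)_\infty=(1+q)\psi(-q)$ via \eqref{Theta_psi_product}. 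By contrast, you recast the sum as a ${}_2\phi_1$ with the redundant upper parameter $q^2$, push it through Jackson's transformation \eqref{Jackson_transformation}, and then invoke a limiting $q$-Gauss in the style of Lemma~\ref{RAMA_lemma}---faithfully imitating the machinery used for Theorems~\ref{Ordinary_Partitions_Analytic_Theorem} and~\ref{Over_S_V_analytic_theorem}, where no such collapse is available. What your route buys is the pleasant intermediate identity
\[
\sum_{n\ge 1}\frac{(-1)^n q^{2n}}{1-q^{2n}}\frac{(-q;q^2)_{n-1}}{(q^2;q^2)_{n-1}}q^{n-1}
=\sum_{n\ge 1}\frac{(-1)^n q^{n(n+1)}}{(q^2;q^2)_n(1+q^{2n+1})},
\]
which the paper does not record; what the paper's route buys is a two-line proof.
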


\begin{proof} We multiply both sides of \eqref{S_function_for_P_ped_EQN} with $1+q$ and add 1. The resulting identity becomes a special case of the $q$-binomial theorem \eqref{q_binomial} with $(a,q,z) = (-1/q,q^2,-q^3)$ provided that we use \eqref{Theta_psi_product} with $q\mapsto -q$.
\end{proof}

The combinatorial interpretation of the left-hand side summand \begin{equation}\label{P_ped_left_S} \frac{(-1)^n q^{2n}}{1-q^{2n}}\frac{(-q;q^2)_{n-1}}{(q^2,q^2)_{n-1}}q^{n-1}\end{equation} for some positive $n$ is really similar to the previous constructions. The main difference is the use of 2-modular Ferrers diagrams, which has been introduced in Section~\ref{Section_def}, instead. We will be following similar steps that we followed in finding the combinatorial interpretation of Theorem~\ref{Ordinary_Partitions_Analytic_Theorem}.

Let $n$ be a fixed positive integer. The factor \begin{equation}\label{P_ped_geometric_series_factor}
\frac{(-1)^n q^{2n}}{1-q^{2n}} = \sum_{k\geq 1} (-1)^n q^{2kn}
\end{equation}
is the generating function of partitions of the type $\pi_1 = (\,(2k)^n\,)$ for some positive integer $k$, where these partitions get counted with a weight $+1$ if the number of parts of the partition $n$ is even and with $-1$ if $n$ is odd. The second factor \begin{equation}\label{P_ped_with_distinct_odd_factor}
\frac{(-q;q^2)_{n-1}}{(q^2,q^2)_{n-1}}
\end{equation} is the generating function for the number of partitions with distinct odd parts $\leq 2n-2$. We can express these partitions in 2-modular Ferrers diagrams and take their conjugates. The outcome would show that the same factor is the generating function for the number of partitions $\pi_2$ with distinct odd parts where the number of parts is $< n$. Finally, the term $q^{n-1}$ can be thought as the generating function of the partitions $\pi_3=(1^{n-1})$.

We would like to add the partitions $\pi_1$, $\pi_2$, and $\pi_3$ to make up a new partition. This will be done similar to the example of Table~\ref{Ferrers_adding}. We start by putting partitions $\pi_1$, $\pi_2$, and $\pi_3$ and add them up row-wise. When doing so, the possible boxes filled with 1's coming from $\pi_2$ are combined with the $1$'s of $\pi_3$ and turned into a row ending of a box with a $2$ in it. There being $n-1$ parts in $\pi_3$ and the row-wise addition of these partitions also makes sure that the outcome partition is a partition $\pi$ with distinct even parts where the smallest part is necessarily even. An illustration is given in Table~\ref{2Ferrer_addition}.

\begin{table}[htb]\caption{Demonstration of putting together partitions in the summand of \eqref{P_ped_left_S}}\label{2Ferrer_addition}
\begin{center}\vspace{-.5cm}
\definecolor{cqcqcq}{rgb}{0.75,0.75,0.75}
\begin{tikzpicture}[line cap=round,line join=round,>=triangle 45,x=0.177cm,y=0.177cm]
\clip(0.4,-3.5) rectangle (88,26.5);
\draw [line width=1pt] (5,22)-- (14,22);
\draw [line width=1pt] (14,22)-- (14,1);
\draw [line width=1pt] (14,1)-- (5,1);
\draw [line width=1pt] (5,1)-- (5,22);
\draw [line width=1pt] (15,22)-- (15,4);
\draw [line width=1pt] (41,22)-- (15,22);
\draw [line width=1pt] (15,4)-- (20,4);
\draw [line width=1pt] (20,4)-- (20,6);
\draw [line width=1pt] (20,6)-- (22,6);
\draw [line width=1pt] (22,8)-- (22,6);
\draw [line width=1pt] (22,8)-- (24,8);
\draw [line width=1pt] (24,8)-- (24,9);
\draw [line width=1pt] (24,9)-- (28,9);
\draw [line width=1pt] (28,9)-- (28,12);
\draw [line width=1pt] (28,12)-- (33,12);
\draw [line width=1pt] (33,12)-- (33,13);
\draw [line width=1pt] (33,13)-- (34,13);
\draw [line width=1pt] (34,13)-- (34,14);
\draw [line width=1pt] (34,14)-- (36,14);
\draw [line width=1pt] (36,14)-- (36,18);
\draw [line width=1pt] (36,18)-- (37,18);
\draw [line width=1pt] (37,18)-- (37,19);
\draw [line width=1pt] (37,19)-- (39,19);
\draw [line width=1pt] (39,19)-- (39,20);
\draw [line width=1pt] (39,20)-- (41,20);
\draw [line width=1pt] (41,20)-- (41,22);
\draw [line width=1pt] (3.66,22)-- (4.33,22);
\draw [line width=1pt] (3.66,1)-- (4.33,1);
\draw [line width=1pt] (4,22)-- (4,1);
\draw [line width=1pt] (5,0.33)-- (5,-0.33);
\draw [line width=1pt] (5,0)-- (14,0);
\draw [line width=1pt] (14,0.33)-- (14,-0.33);
\draw (5.6,21.3) node[anchor=center] {\tiny 2};
\draw (5.6,20.3) node[anchor=center] {\tiny 2};
\draw (6.6,21.3) node[anchor=center] {\tiny 2};
\draw (8.3,21.1) node[anchor=center] {\tiny \dots};
\draw (5.6,19.1) node[anchor=center] {\tiny \vdots};
\draw (13.5,1.6) node[anchor=center] {\tiny 2};
\draw (15.6,21.3) node[anchor=center] {\tiny 2};
\draw (17.3,21.1) node[anchor=center] {\tiny \dots};
\draw (15.6,20.1) node[anchor=center] {\tiny \vdots};
\draw [line width=1pt](42,22)-- (43,22);
\draw [line width=1pt](43,22)-- (43,2);
\draw [line width=1pt](43,2)-- (42,2);
\draw [line width=1pt](42,22)-- (42,2);
\draw [line width=1pt](44,22)-- (44,2);
\draw [line width=1pt](43.66,22)-- (44.33,22);
\draw [line width=1pt](43.66,2)-- (44.33,2);
\draw [line width=1pt](44,2)-- (44,22);
\draw (42.5,21.3) node[anchor=center] {\tiny 1};
\draw (42.5,20.3) node[anchor=center] {\tiny 1};
\draw (42.5,19.3) node[anchor=center] {\tiny \vdots};
\draw (42.5,3.4) node[anchor=center] {\tiny 1};
\draw (42.5,2.4) node[anchor=center] {\tiny 1};
\draw [line width=1pt] (52,22)-- (61,22);
\draw [line width=1pt] (61,1)-- (52,1);
\draw [line width=1pt] (52,1)-- (52,22);
\draw [line width=1pt] (87,22)-- (61,22);
\draw [line width=1pt] (67,4)-- (67,6);
\draw [line width=1pt] (67,6)-- (69,6);
\draw [line width=1pt] (69,8)-- (69,6);
\draw [line width=1pt] (69,8)-- (70,8);
\draw [line width=1pt] (70,8)-- (70,9);
\draw [line width=1pt] (70,9)-- (75,9);
\draw [line width=1pt] (75,9)-- (75,12);
\draw [line width=1pt] (75,12)-- (79,12);
\draw [line width=1pt] (79,12)-- (79,13);
\draw [line width=1pt] (79,13)-- (81,13);
\draw [line width=1pt] (81,13)-- (81,14);
\draw [line width=1pt] (81,14)-- (82,14);
\draw [line width=1pt] (84,18)-- (84,19);
\draw [line width=1pt] (84,19)-- (86,19);
\draw [line width=1pt] (86,19)-- (86,20);
\draw [line width=1pt] (86,20)-- (87,20);
\draw (40.5,21.3) node[anchor=center] {\tiny 2};
\draw (40.5,20.4) node[anchor=center] {\tiny 1};
\draw [line width=1pt] (82,14)-- (82,15);
\draw [line width=1pt] (82,15)-- (83,15);
\draw [line width=1pt] (83,15)-- (83,18);
\draw [line width=1pt] (83,18)-- (84,18);
\draw [line width=1pt] (88,21)-- (88,22);
\draw [line width=1pt] (88,22)-- (87,22);
\draw [line width=1pt] (88,21)-- (87,21);
\draw [line width=1pt] (87,21)-- (87,20);
\draw (60.4,1.6) node[anchor=center] {\tiny 2};
\draw [line width=1pt] (61,1)-- (61,2);
\draw [line width=1pt] (61,2)-- (62,2);
\draw [line width=1pt] (62,4)-- (62,2);
\draw [line width=1pt] (62,4)-- (67,4);
\draw (87.4,21.4) node[anchor=center] {\tiny 1};
\draw (86.4,20.4) node[anchor=center] {\tiny 2};
\draw (85.4,19.4) node[anchor=center] {\tiny 1};
\draw (83.4,18.4) node[anchor=center] {\tiny 1};
\draw (82.4,17.4) node[anchor=center] {\tiny 1};
\draw (82.4,16.4) node[anchor=center] {\tiny 1};
\draw (82.4,15.4) node[anchor=center] {\tiny 1};
\draw (81.4,14.4) node[anchor=center] {\tiny 2};
\draw (61.4,2.4) node[anchor=center] {\tiny 1};
\draw (61.4,3.4) node[anchor=center] {\tiny 1};
\draw (38.4,19.4) node[anchor=center] {\tiny 2};
\draw (36.4,18.4) node[anchor=center] {\tiny 2};
\draw (35.4,17.4) node[anchor=center] {\tiny 2};
\draw (35.4,16.4) node[anchor=center] {\tiny 2};
\draw (35.4,15.4) node[anchor=center] {\tiny 2};
\draw (35.4,14.4) node[anchor=center] {\tiny 1};
\draw (48,13.5) node[anchor=center] {$\rightarrow$};
\draw [line width=1pt] (52,0.33)-- (52,-0.33);
\draw [line width=1pt] (52,0)-- (61,0);
\draw [line width=1pt] (61,0.33)-- (61,-0.33);
\draw [line width=1pt](50.66,22)-- (51.33,22);
\draw [line width=1pt](50.66,1)-- (51.33,1);
\draw [line width=1pt](51,1)-- (51,22);
\draw (10,23) node[anchor=center] {$\pi_1$};
\draw (27,23) node[anchor=center] {$\pi_2$};
\draw (43,23) node[anchor=center] {$\pi_3$};
\draw (70,23) node[anchor=center] {$\pi$};
\draw (9,-1.2) node[anchor=center] {\small $s(\pi_1)$};
\draw (57,-1.2) node[anchor=center] {\small $s(\pi)=s(\pi_1)$};
\draw (2,12) node[anchor=center] {\tiny $\nu(\pi_1)$};
\draw (46.02,20) node[anchor=center] {\tiny $\nu(\pi_3)$};
\draw (46,18.5) node[anchor=center] {\tiny $=$};
\draw (46.02,17.5) node[anchor=center] {\tiny $\nu(\pi_1)$};
\draw (47.3,16.3) node[anchor=center] {\tiny $-1$};
\draw (49.4,9) node[anchor=center] {\tiny $\nu(\pi)$};
\draw (50,7.8) node[anchor=center] {\tiny $=$};
\draw (49.1,7) node[anchor=center] {\tiny $\nu(\pi_1)$};
\end{tikzpicture}
\end{center}
\end{table}
 
Let $\P_{e}$ be the subset of $\P$ where the smallest part is necessarily a positive even integer. The above construction proves that the left hand side of \eqref{S_function_for_P_ped_EQN} is the generating function for the weighted count of partitions from $\P_e$ counted by the weight $+1$ or $-1$ depending on the number of parts in the partition being even or odd, respectively: \begin{equation}\label{P_e_gen_func}\sum_{\pi\in\P_e} (-1)^{\nu(\pi)}q^{|\pi|} = \sum_{n\geq 1} \frac{(-1)^n q^{2n}}{1-q^{2n}}\frac{(-q;q^2)_{n-1}}{(q^2,q^2)_{n-1}}q^{n-1}.\end{equation}The right-hand side of \eqref{S_function_for_P_ped_EQN}, by looking at the geometric series, can easily be interpreted combinatorially. This study proves \begin{equation}\label{unrefined_triangular_eqn}
\sum_{\substack{\pi\in\P_e,\\ |\pi|=N}} (-1)^{\nu(\pi)}  = (-1)^{N+1} \chi (N \not= \triangle \,).
\end{equation} where $N$ is a positive integer and $\triangle$ represents ``a triangular number." The simple observation \eqref{parity_equivalence} can be used on \eqref{unrefined_triangular_eqn} to simplify the equation.

\begin{theorem}Let $N$ be a positive integer. Then,\begin{equation}
\sum_{\substack{\pi\in\P_e,\\ |\pi|=N}} (-1)^{\nu_e(\pi)+1}  =  \chi (N \not= \triangle \,).
\end{equation} where $\triangle$ represents ``a triangular number."
\end{theorem}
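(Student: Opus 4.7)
The plan is to derive this theorem directly from the unrefined identity \eqref{unrefined_triangular_eqn}, which states
\[
\sum_{\substack{\pi\in\P_e,\\ |\pi|=N}} (-1)^{\nu(\pi)} \;=\; (-1)^{N+1}\chi(N\neq \triangle),
\]
by swapping the statistic $\nu(\pi)$ in the exponent for $\nu_e(\pi)$ via the parity observation \eqref{parity_equivalence}. All the combinatorial and analytic work has already been done in Theorem~\ref{S_function_for_P_ped_THM} and the discussion leading to \eqref{unrefined_triangular_eqn}, so the remaining task is a short parity calculation.

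First, I would recall \eqref{parity_equivalence}: for any partition $\pi$ one has
\[
\nu(\pi) \;\equiv\; |\pi| + \nu_e(\pi) \pmod{2},
\]
which comes simply from the fact that $|\pi| \equiv \nu_o(\pi)\pmod 2$ and $\nu(\pi)=\nu_e(\pi)+\nu_o(\pi)$. Consequently, for every $\pi\in\P_e$ with $|\pi|=N$,
\[
(-1)^{\nu(\pi)} \;=\; (-1)^{N}\,(-1)^{\nu_e(\pi)}.
\]

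Next, I would substitute this identity into the left-hand side of \eqref{unrefined_triangular_eqn} and pull the fixed sign $(-1)^N$ out of the sum:
\[
(-1)^{N}\sum_{\substack{\pi\in\P_e,\\ |\pi|=N}} (-1)^{\nu_e(\pi)} \;=\; (-1)^{N+1}\chi(N\neq \triangle).
\]
Dividing both sides by $(-1)^N$ (equivalently, multiplying by $(-1)^N$) gives
\[
\sum_{\substack{\pi\in\P_e,\\ |\pi|=N}} (-1)^{\nu_e(\pi)} \;=\; -\chi(N\neq \triangle),
\]
and multiplying through by $-1$ (absorbing it into the exponent as $(-1)^{\nu_e(\pi)+1}$) yields the desired identity.

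There is no real obstacle here; the theorem is literally a cosmetic restatement of \eqref{unrefined_triangular_eqn} under the congruence \eqref{parity_equivalence}, entirely parallel to how Theorem~\ref{Alladi_squares_simplified} was extracted from Theorem~\ref{Alladi_squares_THM} and how Theorem~\ref{Explicit_weights_theorem} was extracted from \eqref{Explicit_weights}. The only point worth being careful about is not to drop the sign: the conversion trades the factor $(-1)^{N+1}$ on the right of \eqref{unrefined_triangular_eqn} for a $+1$ here precisely because applying \eqref{parity_equivalence} eats exactly one factor of $(-1)^N$, and this is why $\nu_e(\pi)+1$ (rather than $\nu_e(\pi)$) appears in the exponent of the final statement.
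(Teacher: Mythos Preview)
Your proposal is correct and is exactly the approach the paper takes: the paper simply says that the observation \eqref{parity_equivalence} applied to \eqref{unrefined_triangular_eqn} yields the theorem, and your write-up spells out precisely that substitution and sign bookkeeping.
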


Moreover, it is easy to see that the generating function for the weighted count of partitions from $\P$ counted by the weight $+1$ or $-1$ depending on the number of parts is clearly \begin{equation}\label{P_gen_func}
\sum_{\pi\in\P} (-1)^{\nu(\pi)}q^{|\pi|} = \frac{(q^2;q^2)_\infty}{(-q;q^2)_\infty} -1= \psi(-q)-1.
\end{equation} Hence, \eqref{S_function_for_P_ped_EQN}, \eqref{P_e_gen_func}, and \eqref{P_gen_func} together yields \begin{equation}\label{P_o_gen_func}\sum_{\pi\in\P_o} (-1)^{\nu(\pi)}q^{|\pi|} = \frac{1}{1+q}-1, \end{equation} where $\P_o$ is the subset of $\P$ where the smallest part is necessarily a positive odd integer. 

We note that the above study can be easily generalized by inserting an extra parameter $z$. The identities \eqref{S_function_for_P_ped_EQN} and \eqref{P_e_gen_func} turn into
\begin{equation}
\sum_{n\geq 1} \frac{(-1)^n q^{2n}}{1-q^{2n}}\frac{(-q/z;q^2)_{n-1}}{(q^2,q^2)_{n-1}}(zq)^{n-1} = \frac{(q^2;q^2)_\infty}{(-qz;q^2)_\infty} - \frac{\tiny 1}{1+zq}
\end{equation}
and \begin{equation}\label{Bes_Pak1}
\sum_{\pi\in\P_e} (-1)^{\nu(\pi)}z^{\nu_o(\pi)}q^{|\pi|} = \frac{(q^2;q^2)_\infty}{(-qz;q^2)_\infty} - \frac{\tiny 1}{1+zq},
\end{equation}
respectively. We also get the generalization of \eqref{P_gen_func}
\begin{equation}\label{Bes_Pak2}\sum_{\pi\in\P} (-1)^{\nu(\pi)}z^{\nu_o(\pi)}q^{|\pi|} = \frac{(q^2;q^2)_\infty}{(-qz;q^2)_\infty} -1.
\end{equation} 
Combining \eqref{Bes_Pak1} and \eqref{Bes_Pak2} and replacing $z$ by $-z$ we get the result \begin{equation}\label{Bes_Pak3}
\sum_{\pi\in\P_o} (-1)^{\nu_e(\pi)}z^{\nu_o(\pi)}q^{|\pi|} = \frac{1}{1-zq} -1,
\end{equation} which can also be found in \cite[Cor 4, p.1146]{BessenrodtPak}. The equation \eqref{Bes_Pak3} implies

\begin{theorem}
\begin{equation}
\sum_{\substack{\pi\in\P_o,\\ |\pi|=N,\\ \nu_o(\pi)=k}} (-1)^{\nu_e(\pi)} = \chi(N=k).
\end{equation}
\end{theorem}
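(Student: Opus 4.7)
The plan is to read off the claim directly from the generating-function identity \eqref{Bes_Pak3}, which the excerpt has already established. Concretely, I would expand the right-hand side of \eqref{Bes_Pak3} as a geometric series and then match coefficients of $z^kq^N$ on both sides.

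First, I would note that
\begin{equation*}
\frac{1}{1-zq}-1 = \sum_{k\geq 1} z^k q^k.
\end{equation*}
Hence the coefficient of $z^kq^N$ on the right-hand side of \eqref{Bes_Pak3} equals $1$ if $N=k$ and $0$ otherwise, i.e. it equals $\chi(N=k)$ for all positive integers $k,N$.

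Next, I would extract the same coefficient from the left-hand side of \eqref{Bes_Pak3}. Since
\begin{equation*}
\sum_{\pi\in\P_o} (-1)^{\nu_e(\pi)}z^{\nu_o(\pi)}q^{|\pi|}
= \sum_{k,N\geq 0}\Bigl(\sum_{\substack{\pi\in\P_o\\|\pi|=N,\ \nu_o(\pi)=k}} (-1)^{\nu_e(\pi)}\Bigr) z^k q^N,
\end{equation*}
the coefficient of $z^kq^N$ is exactly the sum appearing on the left of the theorem. Equating the two coefficient extractions yields the claimed identity.

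The whole argument is essentially a one-line bookkeeping exercise once \eqref{Bes_Pak3} is in hand, so there is no real obstacle — the substance of the result is contained in the derivation of \eqref{Bes_Pak3} from \eqref{Bes_Pak1} and \eqref{Bes_Pak2} (which in turn rest on the $q$-binomial theorem via Theorem~\ref{S_function_for_P_ped_THM}). The only care needed is to make sure the $k=0$ case is correctly excluded by the definition of $\P_o$: a partition in $\P_o$ has smallest part odd, so $\nu_o(\pi)\geq 1$, matching the fact that the right-hand side $\frac{1}{1-zq}-1$ has no constant term in $z$.
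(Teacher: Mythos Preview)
Your proposal is correct and matches the paper's approach exactly: the paper simply states that equation \eqref{Bes_Pak3} implies the theorem, and your coefficient extraction of $z^kq^N$ from both sides of \eqref{Bes_Pak3} is precisely the intended one-line justification.
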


We can step up our study on the set $\P$ by putting more restrictive conditions on the smallest part. Let $\P_{2,4}$ be the subset of $\P_e$ where the smallest part of a partition is necessarily $2$ mod $4$. Knowing the argument behind the generating function interpretation for $\P$, the generating function of $\P_{2,4}$ with the $\pm 1$ weight with respect to the number of parts can easily be written as \begin{equation}\label{P_2_gen_func}
\sum_{\pi\in\P_{2,4}} (-1)^{\nu(\pi)}q^{|\pi|} = \sum_{n\geq 1} \frac{(-1)^n q^{2n}}{1-q^{4n}}\frac{(-q;q^2)_{n-1}}{(q^2,q^2)_{n-1}}q^{n-1}.
\end{equation} 

We write the related analytic equality.

\begin{theorem}\label{P_2_analytic_theorem}
\begin{equation}\label{P_2_analytic_equation}
\sum_{n\geq 1} \frac{(-1)^n q^{2n}}{1-q^{4n}}\frac{(-q;q^2)_{n-1}}{(q^2;q^2)_{n-1}}q^{n-1} = \frac{1}{1-q}\sum_{n\geq 0} (-1)^n q^{n^2} - \frac{1}{1-q^2}.
\end{equation}
\end{theorem}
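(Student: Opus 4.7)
The plan is to reduce the identity to Theorem~\ref{S_function_for_P_ped_THM} plus one auxiliary sum, in the spirit of the earlier proofs in the paper. The crucial first move is the partial-fraction decomposition
\[\frac{q^{2n}}{1-q^{4n}} = \frac{q^{2n}}{2(1-q^{2n})} + \frac{q^{2n}}{2(1+q^{2n})},\]
which splits the left-hand side of \eqref{P_2_analytic_equation} as $\tfrac{1}{2}S_1+\tfrac{1}{2}T$, where $S_1$ is exactly the left-hand side of \eqref{S_function_for_P_ped_EQN} and
\[T := \sum_{n\geq 1}\frac{(-1)^n q^{2n}}{1+q^{2n}}\frac{(-q;q^2)_{n-1}}{(q^2;q^2)_{n-1}}q^{n-1}.\]
Theorem~\ref{S_function_for_P_ped_THM} immediately handles $S_1$, giving $S_1=\psi(-q)-\tfrac{1}{1+q}$, so the problem reduces to a closed-form evaluation of $T$.

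To compute $T$, I would expand $\frac{q^{2n}}{1+q^{2n}} = \sum_{k\geq 1}(-1)^{k+1}q^{2nk}$ as a formal geometric series, swap the order of summation, shift $n\mapsto m+1$, and apply the $q$-binomial theorem \eqref{q_binomial} (with base $q^2$) to the inner sum in $m$; this collapses it to $\frac{(q^{2k+2};q^2)_\infty}{(-q^{2k+1};q^2)_\infty}$. Using $(q^{2k+2};q^2)_\infty=(q^2;q^2)_\infty/(q^2;q^2)_k$ and $(-q^{2k+1};q^2)_\infty=(-q;q^2)_\infty/(-q;q^2)_k$ to pull the infinite products out front, together with \eqref{Theta_psi_product}, one obtains
\[T = \psi(-q)\sum_{k\geq 1}\frac{(-q;q^2)_k(-q^2)^k}{(q^2;q^2)_k}.\]
A second application of \eqref{q_binomial} (now with $a=-q$, $z=-q^2$, base $q^2$) evaluates the remaining sum as $\frac{(q^3;q^2)_\infty}{(-q^2;q^2)_\infty}-1$. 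Combining the products via $(q^2;q^2)_\infty(q^3;q^2)_\infty=(q;q)_\infty/(1-q)$ and $(-q;q^2)_\infty(-q^2;q^2)_\infty=(-q;q)_\infty$, and invoking \eqref{Theta_phi_prod}, collapses the first piece to $\varphi(-q)/(1-q)$, yielding $T=\varphi(-q)/(1-q)-\psi(-q)$.

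Plugging back, the left-hand side of \eqref{P_2_analytic_equation} equals $\frac{\varphi(-q)}{2(1-q)}-\frac{1}{2(1+q)}$. A routine algebraic clean-up over the common denominator $1-q^2$, followed by the substitution $\varphi(-q)=2\sum_{n\geq 0}(-1)^n q^{n^2}-1$ from \eqref{Theta_phi_prod}, rewrites this as $\frac{1}{1-q}\sum_{n\geq 0}(-1)^n q^{n^2}-\frac{1}{1-q^2}$, matching the right-hand side. The main obstacle is the double $q$-binomial collapse for $T$: \emph{a priori} it is not transparent that $T$ admits such a compact closed form, and the key trick is expanding the Lambert-type factor as a geometric series so that \eqref{q_binomial} applies twice in succession. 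An alternative route would be to recast $T$ after the shift $n\mapsto n+1$ as $\tfrac{-q^2}{1+q^2}\,{}_2\phi_1\!\left(\genfrac{}{}{0pt}{}{-q,-q^2}{-q^4};q^2,-q^3\right)$ and apply Jackson's transformation \eqref{Jackson_transformation} in base $q^2$, but the resulting $_2\phi_2$ looks harder to simplify than the direct geometric expansion.
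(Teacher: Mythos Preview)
Your proof is correct, but it takes a genuinely different route from the paper's own argument. The paper does not split via partial fractions at all: it simply multiplies both sides of \eqref{P_2_analytic_equation} by $2(1+q)$, adds $1$, and observes that the resulting left-hand side is precisely the $q$-Gauss sum \eqref{q_Gauss} with $(a,b,c,q)=(-1,-1/q,-q^2,q^2)$ (so $c/ab=-q^3$), which evaluates to $\tfrac{1+q}{1-q}\,\varphi(-q)$; matching this with the correspondingly transformed right-hand side via \eqref{Theta_phi_prod} is then a one-line algebra check. Thus the paper's proof is a single summation identity, whereas your argument reduces the identity to Theorem~\ref{S_function_for_P_ped_THM} together with two successive applications of the $q$-binomial theorem \eqref{q_binomial} for the auxiliary sum $T$. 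What you gain is that your proof uses only the $q$-binomial theorem (never the $q$-Gauss sum) and makes explicit how Theorem~\ref{P_2_analytic_theorem} sits on top of Theorem~\ref{S_function_for_P_ped_THM}; what the paper gains is brevity and directness---one summation formula does all the work. Your alternative suggestion of rewriting $T$ as a ${}_2\phi_1$ and applying Jackson's transformation is closer in spirit to Sections~\ref{Section_s}--\ref{Section_s_v}, but, as you note, is less clean than the geometric-series trick you actually carried out.
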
  

\begin{proof}
By multiplying both sides of \eqref{P_2_analytic_equation} with $2(1+q)$ and adding 1 to both sides, we see that one can apply the $q$-Gauss sum \eqref{q_Gauss} where $(a,b,c,q,z) = (-1,-1/q,-q^2,q^2,-q^3)$ to the left-hand side. Showing the equality of the right-hand side to the outcome product of the $q$-Gauss sum is a simple task of combining like terms and using the Gauss identity \eqref{Theta_phi_prod}.
\end{proof}

The right-hand side of \eqref{P_2_analytic_equation} can be studied further to get exact formulas.
\begin{align}
\nonumber \frac{1}{1-q}\sum_{n\geq 0} (-1)^n q^{n^2} &= \sum_{k\geq 0} q^k \sum_{n\geq 0} (-1)^n q^{n^2}\\
\nonumber&= 1+q^4+q^5+q^6+q^7+q^8+q^{16}+q^{17}+q^{18}+q^{19}\dots\\
&= 1+ \sum_{N\geq 1}\sum_{j\geq 1}\chi((2j)^2\leq N < (2j+1)^2\,) q^N,\label{phi_over_1-q_formula}
\end{align}
where $\chi$ is as defined in \eqref{truth_function}. Also from the geometric series\begin{equation}
\label{Geo_evens}\frac{1}{1-q^2} = 1+q^2+q^4+q^6+q^8+q^{10}+\dots .\end{equation}
Therefore, combining \eqref{phi_over_1-q_formula} and \eqref{Geo_evens}, we get\begin{align}
\nonumber \frac{1}{1-q}\sum_{n\geq 0} (-1)^n q^{n^2}-\frac{1}{1-q^2}&= -q^2+q^5+q^7-q^{10}-q^{12}-q^{14}+q^{17}+q^{19}+\dots\\
\nonumber&= \sum_{N\geq 1}\sum_{j\geq 1}(\chi(N\text{ is odd})\chi((2j)^2< N < (2j+1)^2)\\
\label{truth functions_of_P_2}&\hspace{2cm}-\chi(N\text{ is even})\chi((2j-1)^2< N < (2j)^2)\,) q^N.
\end{align}

Combining \eqref{P_2_gen_func}, \eqref{P_2_analytic_equation}, and \eqref{truth functions_of_P_2} we get the interesting explicit formula for the weighted count of partitions from the set $\P_{2,4}$.
\begin{theorem}
\begin{align*}
\sum_{\substack{\pi\in\P_{2,4},\\ |\pi|=N}} (-1)^{\nu(\pi)}\hspace{-.1cm} &= \hspace{-.1cm}\sum_{j\geq 1}(\chi(N\text{ is odd})\chi((2j)^2< N < (2j+1)^2)-\chi(N\text{ is even})\chi((2j-1)^2< N < (2j)^2))\\
&=\left\lbrace \begin{array}{rl} 1, &\text{if $N$ is odd and in between an even square and the following odd square,}\\ -1, &\text{if $N$ is even and in between an odd square and the following even square,}\\
0, &\text{otherwise.} \end{array}\right.
\end{align*}
\end{theorem}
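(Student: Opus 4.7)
The proof is essentially a synthesis of three earlier ingredients already assembled in the paragraphs above the theorem, so my plan is to make that synthesis explicit rather than introduce new machinery.

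First, I would invoke the generating function identity \eqref{P_2_gen_func}, which established (from the bijective construction via 2-modular Ferrers diagrams, analogous to Table~\ref{2Ferrer_addition}) that
\begin{equation*}
\sum_{\pi\in\P_{2,4}} (-1)^{\nu(\pi)} q^{|\pi|} = \sum_{n\geq 1} \frac{(-1)^n q^{2n}}{1-q^{4n}}\frac{(-q;q^2)_{n-1}}{(q^2;q^2)_{n-1}}q^{n-1}.
\end{equation*}
Next, by Theorem~\ref{P_2_analytic_theorem}, the right-hand side above equals $\frac{1}{1-q}\sum_{n\geq 0}(-1)^n q^{n^2} - \frac{1}{1-q^2}$. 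These two steps together recast the generating function of the weighted sum on the left-hand side of our target theorem into a clean closed form.

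The final step is to read off the coefficient of $q^N$ explicitly. This has already been carried out in the displayed calculations \eqref{phi_over_1-q_formula}--\eqref{truth functions_of_P_2}: expanding $1/(1-q)$ as a geometric series and using $\sum_{n\geq 0}(-1)^n q^{n^2} = 1 - q + q^4 - q^9 + \cdots$, the product $\frac{1}{1-q}\sum_{n\geq 0}(-1)^n q^{n^2}$ collects, for each $N$, a contribution $+1$ exactly when $N$ lies in an interval $[(2j)^2,(2j+1)^2)$ and $0$ otherwise. Subtracting the even-only series $1/(1-q^2)$ then leaves precisely the alternating pattern claimed: $+1$ when $N$ is odd and strictly between an even square $(2j)^2$ and the next odd square $(2j+1)^2$, $-1$ when $N$ is even and strictly between an odd square $(2j-1)^2$ and the next even square $(2j)^2$, and $0$ otherwise. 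Thus coefficient comparison finishes the proof.

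The only place requiring genuine care is the coefficient extraction in the third step: one must check that the endpoints (the perfect squares themselves) are handled correctly, so that the two geometric series cancel exactly at $N = k^2$ and at even $N$ lying in intervals $[(2j)^2,(2j+1)^2)$, leaving the strict inequalities stated in the conclusion. Since every other step is a straightforward citation of an already-established identity, this boundary bookkeeping is the one place I would spend extra care, though it is still essentially a routine inspection rather than a real obstacle.
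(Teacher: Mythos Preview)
Your proposal is correct and follows exactly the paper's own route: the paper explicitly states that the theorem is obtained by combining \eqref{P_2_gen_func}, \eqref{P_2_analytic_equation}, and \eqref{truth functions_of_P_2}, which is precisely the three-step synthesis you describe. Your remark about checking the endpoint behavior at perfect squares is a reasonable extra caution, but no additional ideas beyond those already in the cited equations are needed.
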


Let $\P_{3,4}$, similar to $P_{2,4}$, be the subset of $\P_o$ where the smallest part of a partition is necessarily $3$ mod $4$. Adding a single 1 to the smallest part of a partition from $\P_{2,4}$ is a bijective map from the set $\P_{2,4}$ to $\P_{3,4}$. Therefore, writing the analogous generating function of weighted count of partitions from $\P_{3,4}$ is rather easy and only requires multiplying \eqref{P_2_gen_func} with and extra $q$. This proves the following theorem.
\begin{theorem}
\begin{align*}
\sum_{\substack{\pi\in\P_{3,4},\\ |\pi|=N}} (-1)^{\nu(\pi)}\hspace{-.1cm} &= \hspace{-.1cm}\sum_{j\geq 1}(\chi(N\text{ is even})\chi((2j)^2< N < (2j+1)^2)-\chi(N\text{ is odd})\chi((2j-1)^2< N < (2j)^2)\,)\\
&=\left\lbrace \begin{array}{rl} 1, & \text{if $N$ is even and in between an even square and the following odd square,}\\ -1, & \text{if $N$ is odd and in between an odd square and the following even square},\\
0, & \text{otherwise.} \end{array}\right.
\end{align*}
\end{theorem}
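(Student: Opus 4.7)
The plan is to combine the bijection $\P_{2,4}\leftrightarrow \P_{3,4}$ described immediately before the statement with the preceding theorem about $\P_{2,4}$.

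First I would make the bijection $\phi\colon\P_{2,4}\to\P_{3,4}$ explicit. Given $\pi\in\P_{2,4}$, its smallest part $s$ satisfies $s\equiv 2\imod 4$ and, because partitions in $\P$ have distinct even parts, $s$ occurs with frequency exactly $1$. Replacing this unique copy of $s$ by $s+1$ produces a multiset $\phi(\pi)$ whose even parts are those of $\pi$ minus $s$ (still distinct) and whose odd parts are those of $\pi$ plus an extra copy of $s+1\equiv 3\imod 4$. Every part of $\phi(\pi)$ is $\geq s+1$, so its smallest part is $s+1$ and $\phi(\pi)\in\P_{3,4}$. The inverse $\phi^{-1}$ sends $\pi'\in\P_{3,4}$ with smallest part $s'$ to the partition obtained by decreasing one copy of $s'$ by $1$; the resulting new even part $s'-1$ is strictly less than every other even part of $\pi'$ (each of which is necessarily $\geq s'+1$), so the outcome lies in $\P_{2,4}$. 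Crucially, $\phi$ preserves $\nu$ and satisfies $|\phi(\pi)|=|\pi|+1$.

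Using this bijection I obtain
\begin{equation*}
\sum_{\substack{\pi\in\P_{3,4}\\ |\pi|=N}} (-1)^{\nu(\pi)} \;=\; \sum_{\substack{\pi\in\P_{2,4}\\ |\pi|=N-1}} (-1)^{\nu(\pi)},
\end{equation*}
which, at the level of generating functions, is precisely multiplying \eqref{P_2_gen_func} by $q$. Applying the preceding theorem on $\P_{2,4}$ with $N$ replaced by $N-1$ then immediately expresses the right-hand side as a sum of truth-function terms.

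The only remaining work is parity bookkeeping: the shift $N\mapsto N-1$ swaps odd and even and translates the conditions $(2j)^2<N-1<(2j+1)^2$ and $(2j-1)^2<N-1<(2j)^2$. Since $(2j)^2$ is even and $(2j\pm1)^2$ is odd, these translate cleanly, with no boundary coincidences, into $(2j)^2<N<(2j+1)^2$ with $N$ even and $(2j-1)^2<N<(2j)^2$ with $N$ odd, respectively, matching the displayed formula. The main obstacle, if one can call it that, is the careful verification that $\phi$ stays inside $\P$ (no collision of the newly created even part with an existing one) together with this parity tracking at the endpoints; once that is done the stated formula drops out.
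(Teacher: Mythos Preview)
Your proposal is correct and is exactly the approach taken in the paper: the paper states that adding $1$ to the smallest part of a partition in $\P_{2,4}$ gives a bijection onto $\P_{3,4}$, so the generating function for $\P_{3,4}$ is obtained by multiplying \eqref{P_2_gen_func} by $q$, after which the result follows from the preceding theorem on $\P_{2,4}$. Your write-up simply spells out the well-definedness of the bijection and the endpoint parity checks in more detail than the paper does.
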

The combination of the weighted generating functions accounts for every number that is not a perfect square. This interesting relation can be represented as follows.

\begin{theorem}
\begin{equation}\label{Unrefined_difference}
\sum_{\substack{\pi\in\P_{3,4},\\ |\pi|=N}} (-1)^{\nu(\pi)} -\sum_{\substack{\pi\in\P_{2,4},\\ |\pi|=N}} (-1)^{\nu(\pi)} = (-1)^N \chi(N\not=\square).
\end{equation}
\end{theorem}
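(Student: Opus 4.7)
The plan is to derive this identity as a short corollary of the machinery already developed, via generating functions. The starting point is the bijection mentioned just before the statement: adding $1$ to the smallest part of any $\pi\in\P_{2,4}$ produces a partition in $\P_{3,4}$ with the same number of parts and norm larger by $1$, and this map is evidently invertible (subtract $1$ from the smallest part of any $\pi\in\P_{3,4}$; the new smallest part is $\equiv 2\imod 4$, hence even and distinct from the remaining even parts because it is strictly smaller). Consequently
\begin{equation*}
\sum_{\pi\in\P_{3,4}} (-1)^{\nu(\pi)} q^{|\pi|} \;=\; q \sum_{\pi\in\P_{2,4}} (-1)^{\nu(\pi)} q^{|\pi|}.
\end{equation*}

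Next I would combine this with \eqref{P_2_gen_func} and Theorem~\ref{P_2_analytic_theorem}, which already identify the $\P_{2,4}$ generating function with $\frac{1}{1-q}\sum_{n\geq 0}(-1)^n q^{n^2} - \frac{1}{1-q^2}$. Subtracting gives
\begin{equation*}
\sum_{\pi\in\P_{3,4}} (-1)^{\nu(\pi)} q^{|\pi|} - \sum_{\pi\in\P_{2,4}} (-1)^{\nu(\pi)} q^{|\pi|} = (q-1)\!\left(\frac{1}{1-q}\sum_{n\geq 0}(-1)^n q^{n^2} - \frac{1}{1-q^2}\right) = \frac{1}{1+q} - \sum_{n\geq 0}(-1)^n q^{n^2}.
\end{equation*}

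Finally I would extract the coefficient of $q^N$ on both sides. The first term contributes $(-1)^N$ for every $N\geq 0$, and the second contributes $(-1)^{k+1}$ when $N=k^2$ and $0$ otherwise. When $N$ is a perfect square $k^2$, the parities of $k^2$ and $k$ agree, so $(-1)^N+(-1)^{k+1}=0$; when $N$ is not a square, only the first term survives and yields $(-1)^N$. This matches the right-hand side of \eqref{Unrefined_difference} exactly. Alternatively, one can just superimpose the explicit case-by-case formulas in the two preceding theorems and verify all five cases (even/odd $N$ in each of the two inter-square ranges, plus $N=\square$) by inspection; both routes are equally routine, and no new obstacle arises beyond the coefficient bookkeeping above.
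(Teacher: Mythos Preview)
Your argument is correct and essentially matches the paper's: the paper deduces \eqref{Unrefined_difference} by combining the two explicit formulas of the preceding theorems for $\P_{2,4}$ and $\P_{3,4}$, which is precisely your ``alternative'' route, while your primary generating-function computation via $(q-1)$ times \eqref{P_2_analytic_equation} is just a cleaner repackaging of the same subtraction before passing to coefficients. Either way the content is identical.
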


This result, in a sense, is complementary to Alladi's identity, Theorem~\ref{Alladi_squares_THM}. Also, the right-hand side formula also appears in the recent study of Andrews and Yee \cite[Thm 3.2, p.10]{Andrews_Yee_Top_Bottom_Heavy} as the same weighted count with respect to the number of parts of bottom-heavy partitions (a specific subset of overpartitions). The interested reader is invited to examine the relation between the set of bottom-heavy partitions, $\P_{2,4}$, and $\P_{3,4}$.

Once again one can simplify the argument of \eqref{Unrefined_difference} with the observation \eqref{parity_equivalence}.

\begin{theorem}
\begin{equation}\label{Refined_difference}
\sum_{\substack{\pi\in\P_{3,4},\\ |\pi|=N}} (-1)^{\nu_e(\pi)} -\sum_{\substack{\pi\in\P_{2,4},\\ |\pi|=N}} (-1)^{\nu_e(\pi)} = \chi(N\not=\square).
\end{equation}
\end{theorem}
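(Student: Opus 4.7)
The plan is to derive \eqref{Refined_difference} as an immediate corollary of the previous theorem \eqref{Unrefined_difference} by applying the parity observation \eqref{parity_equivalence}. Recall that for any partition $\pi$ one has $\nu(\pi) - |\pi| \equiv \nu_e(\pi) \pmod{2}$, which, when $|\pi|=N$ is fixed, rearranges to $\nu(\pi) \equiv \nu_e(\pi) + N \pmod{2}$, so that $(-1)^{\nu(\pi)} = (-1)^N (-1)^{\nu_e(\pi)}$.

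First I would substitute this identity into each of the two sums on the left-hand side of \eqref{Unrefined_difference}. Because the factor $(-1)^N$ depends only on $N$ and not on the summation variable $\pi$, it pulls out of both sums simultaneously, yielding
\begin{equation*}
(-1)^N\left(\sum_{\substack{\pi\in\P_{3,4},\\|\pi|=N}} (-1)^{\nu_e(\pi)} - \sum_{\substack{\pi\in\P_{2,4},\\|\pi|=N}} (-1)^{\nu_e(\pi)}\right) = (-1)^N \chi(N\neq \square).
\end{equation*}
Then I would divide both sides by $(-1)^N$, which is legitimate since $(-1)^N\in\{\pm 1\}$, to conclude the claimed identity \eqref{Refined_difference}.

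There is no genuine obstacle in this argument; it is purely a parity bookkeeping step, identical in spirit to the simplifications used earlier to pass from \eqref{Explicit_weights} to Theorem~\ref{Explicit_weights_theorem} and from \eqref{unrefined_triangular_eqn} to the corresponding refined triangular-number theorem. The only care required is to confirm that $\P_{3,4}$ and $\P_{2,4}$ are disjoint subsets of $\P$ (which is clear, as the smallest part lies in distinct residue classes mod $4$), so that the two sums are independent and the factor $(-1)^N$ can indeed be distributed and removed uniformly.
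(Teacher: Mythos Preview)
Your proposal is correct and follows exactly the paper's own approach: the paper simply states that one can simplify \eqref{Unrefined_difference} using the observation \eqref{parity_equivalence}, which is precisely the substitution $(-1)^{\nu(\pi)} = (-1)^N(-1)^{\nu_e(\pi)}$ you carry out.
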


\section{Overpartitions with no parts divisible by 3}\label{Section_not_0_mod_3}

In this section we do the weighted interpretation of an identity of Ramanujan \cite[E. 4.2.8, p. 85]{LostNotebook_2}. We write this identity in an equivalent form for the ease of interpretation purposes.

\begin{theorem}[Ramanujan]\label{Ramanujan_THM}
\begin{equation}\label{Ramanujan_EQN}
 \frac{(-q;q^3)_\infty(-q^2;q^3)_\infty}{(q;q^3)_\infty(q^2;q^3)_\infty} -1=\sum_{n\geq 1} \frac{(-q;q)_{n-1}}{(q;q)_{n-1}}\frac{2q^n}{1-q^n}\frac{q^{n^2-n}}{(q;q^2)_n}.
\end{equation}
\end{theorem}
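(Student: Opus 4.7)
The plan is to first algebraically simplify the right-hand side of \eqref{Ramanujan_EQN} into a cleaner $q$-hypergeometric series, and then to reduce the resulting identity to one that can be attacked by the combined $q$-Gauss/Jackson machinery that has served in every previous section, or that can be invoked directly from Ramanujan's Lost Notebook as the surrounding text already indicates.

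The first step is the simplification. Since $\frac{1}{(q;q)_{n-1}}\cdot\frac{1}{1-q^n}=\frac{1}{(q;q)_n}$ and $q^n\cdot q^{n^2-n}=q^{n^2}$, the right-hand side collapses to
\[
\sum_{n\ge 1}\frac{2(-q;q)_{n-1}\,q^{n^2}}{(q;q)_n(q;q^2)_n}.
\]
Using $(-1;q)_n=2(-q;q)_{n-1}$ for $n\ge 1$ and transferring the $-1$ from the left-hand side of \eqref{Ramanujan_EQN} to the right-hand side—where it is exactly the $n=0$ term $(-1;q)_0 q^0/[(q;q)_0(q;q^2)_0]=1$—identity \eqref{Ramanujan_EQN} is equivalent to
\[
\sum_{n\ge 0}\frac{(-1;q)_n\,q^{n^2}}{(q;q)_n(q;q^2)_n}=\frac{(-q;q^3)_\infty(-q^2;q^3)_\infty}{(q;q^3)_\infty(q^2;q^3)_\infty}.
\]

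The second step is to establish this cleaner identity in the style of the paper. I would start from the observation that
\[
{}_2\phi_1\!\left(\genfrac{}{}{0pt}{}{-1,\,-1/q}{q};q^3,q^2\right)=\frac{(-q;q^3)_\infty(-q^2;q^3)_\infty}{(q;q^3)_\infty(q^2;q^3)_\infty}
\]
by the $q$-Gauss sum \eqref{q_Gauss} in base $q^3$, since $c/(ab)=q/((-1)(-1/q))=q^2$. I would then apply Jackson's transformation \eqref{Jackson_transformation} (with $q$ replaced by $q^3$) to this ${}_2\phi_1$, producing a ${}_2\phi_2$ in base $q^3$ in which the common $(-q^2;q^3)_n$ factor in the numerator and denominator cancels. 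Next I would rewrite the surviving base-$q^3$ Pochhammers in terms of base-$q$ and base-$q^2$ Pochhammers using $(q;q)_{2n}=(q;q^2)_n(q^2;q^2)_n$ and $(q^2;q^2)_n=(q;q)_n(-q;q)_n$, so as to convert the series into the target form indexed by $q^{n^2}$.

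The main obstacle is the base mismatch: a single direct application of Jackson to the ${}_2\phi_1$ above produces a series indexed by the pentagonal-number exponent $q^{n(3n-1)/2}$ rather than the perfect-square exponent $q^{n^2}$ that our target requires. Bridging this gap seems to demand either a further transformation (such as a second application of Heine's transformation, or re-summation via Jacobi's triple product through \eqref{Theta_phi_prod}), or, as the paper's text already indicates, simply invoking Ramanujan's identity \cite[E. 4.2.8, p. 85]{LostNotebook_2} once the algebraic simplification above has put the identity into its Lost-Notebook form. In either case the remaining content of the section—the weighted partition interpretation of \eqref{Ramanujan_EQN}—can proceed directly from the simplified form.
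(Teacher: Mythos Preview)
The paper does not prove this theorem: it is stated as Ramanujan's result, cited from \cite[E.~4.2.8, p.~85]{LostNotebook_2} (and noted to appear in Slater's list), and then used as input for the combinatorial interpretation that occupies the rest of Section~\ref{Section_not_0_mod_3}. So there is no argument in the paper to compare against; the benchmark is simply a citation.

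Your Step~1 is correct and useful: the collapse of the right-hand side to $\sum_{n\ge 0}(-1;q)_n\,q^{n^2}/[(q;q)_n(q;q^2)_n]$ is exactly the Lost-Notebook form of the identity, and your $q$-Gauss computation showing that ${}_2\phi_1(-1,-1/q;q;q^3,q^2)$ sums to the desired product is also right. However, Step~2 does not close, and you say so yourself: a single Jackson transformation in base $q^3$ produces exponents $q^{n(3n-1)/2}$, not $q^{n^2}$, and you do not supply the bridge between the two. Your fallback---invoke \cite[E.~4.2.8]{LostNotebook_2}---is precisely what the paper already does. So the proposal does not go beyond the paper; it is an honest attempt at an independent proof that stalls at the base mismatch and then reverts to the same citation.

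If you want to complete an independent proof in the spirit of the earlier sections, the standard route here is not a single Jackson step but an iterated Heine transformation (or the specialized cubic summation connecting base-$q$ series to base-$q^3$ products); this is how Andrews and Berndt treat Entries~4.2.8--4.2.9 in \cite{LostNotebook_2}.
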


Identity \eqref{Ramanujan_EQN} also appears in the Slater's list \cite[6, p. 152]{Slater} with a misplaced exponent type typo.

It is clear that the left-hand side of \eqref{Ramanujan_EQN} is the generating function for the number of overpartitions where no part is $0$ mod $3$. Let $\C$ be the set of all non-empty partitions with no parts divisible by 3. We focus our interest in the combinatorial interpretation of the right-hand side of \eqref{Ramanujan_EQN}. Let $n$ be a fixed positive integer. The factors \begin{equation}
 \frac{(-q;q)_{n-1}}{(q;q)_{n-1}}\frac{2q^n}{1-q^n}
\end{equation} of the right-hand side of \eqref{Ramanujan_EQN} is the generating function for the number of overpartitions, $\bar{\pi}_1$, into parts $\leq n$ where the part $n$ appears at least once. When counting the total number of overpartitions of this type of partitions, we can instead count the partitions $\pi_1$ into parts $\leq n$ where the part $n$ appears at least once with the weight $2^{\nu_d(\pi_1)}$ as in \eqref{over_connect_regular_abstract}. The remaining factor \[\frac{q^{n^2-n}}{(q;q^2)_n}\] can be split into two in the interpretation. The term $q^{n^2-n}$ is the generating function of the partitions of type $\pi_2=(2,4,6,\dots, 2(n-1))$ in frequency notation as $n^2-n$ is double a triangle number. The term $(q;q^2)_n^{-1}$ is the generating function for the number of partitions, $\pi_3$, into odd parts $\leq 2n-1$. 

It is clear that among the parts of $\pi_1$, $\pi_2$, and $\pi_3$ the largest possible part-size is $2n-1$. Even if $2n-1$ is not a part of $\pi_3$, the second largest possible part $2n-2$ is a part of $\pi_2$. Therefore, given $\pi_1$, $\pi_2$, and $\pi_3$ we can directly find the respective $n$. We merge (add the parts' frequencies of) these three partitions into a new partition and look at the number of possible sources for different part sizes. The partition $\pi$ has one appearance of all the even parts $\leq 2n-2$ coming from $\pi_2$, any extra appearance of an even number (which is necessarily $\leq n$) must be coming from the partition $\pi_1$ and should be counted with the overpartition weights. The odd parts $\leq n$ can either be coming from the partition $\pi_1$ or $\pi_3$. These parts need to be counted with both the overpartition weights and normally to account for both possibilities. All the other parts' source partitions can uniquely be identified so they would be counted with trivial weight 1. 

Let $\R$ be the set of partitions, where 
\begin{enumerate}[i.] 
\item all parts $\leq 2n-1$ for some integer $n>0$,
\item all even integers $\leq 2n-2$ appears as parts,
\item $n$ appears with the frequency $f_n \geq 1+ \chi(n\text{ is even})$,  
\item no even part $>n$ repeats. 
\end{enumerate} Clearly, \[n := n(\pi)= \frac{\text{largest even part of }\pi}{2} + 1.\] Define the statistics 
\begin{align}
\label{delta}\delta(\pi) &= \sum_{j= 1}^{n-1} \chi(f_{2j}> 1),\\
\nonumber\gamma(\pi) &= (\chi(n\text{ is even})+2\cdot f_n\cdot\chi(n\text{ is odd}))\prod_{2j+1<n} (2f_{2j+1}+1),\end{align}
and
\begin{equation}
\mu (\pi) = 2^{\delta(\pi)} \cdot \gamma(\pi),
\end{equation} for $\pi\in \R$. We have the following identity.

\begin{theorem}\label{Last_theorem1}
\begin{equation}
\sum_{\pi\in\C} 2^{\nu_d(\pi)}q^{|\pi|} = \sum_{\pi\in\R} \mu(\pi) q^{|\pi|}.
\end{equation}
\end{theorem}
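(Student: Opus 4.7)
The plan is to deduce Theorem~\ref{Last_theorem1} from Ramanujan's identity~\eqref{Ramanujan_EQN} by giving combinatorial interpretations of both of its sides. First, I would observe that the left-hand side of \eqref{Ramanujan_EQN} is the generating function for non-empty overpartitions with no part divisible by $3$, so by the weighted connection \eqref{over_connect_regular_abstract} restricted to parts not divisible by $3$, it equals $\sum_{\pi\in\C} 2^{\nu_d(\pi)}q^{|\pi|}$. Consequently, the left-hand side of Theorem~\ref{Last_theorem1} coincides with the left-hand side of \eqref{Ramanujan_EQN}, and it remains to show that the right-hand side of \eqref{Ramanujan_EQN} equals $\sum_{\pi\in\R}\mu(\pi)q^{|\pi|}$.

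For each fixed $n\geq 1$, I would interpret the summand following the outline in the text: the factor $\frac{(-q;q)_{n-1}}{(q;q)_{n-1}}\cdot\frac{2q^n}{1-q^n}$ enumerates ordinary partitions $\pi_1$ into parts $\leq n$ with $n$ appearing, weighted by $2^{\nu_d(\pi_1)}$; $q^{n^2-n}$ corresponds to the single partition $\pi_2=(2,4,\ldots,2(n-1))$; and $(q;q^2)_n^{-1}$ enumerates partitions $\pi_3$ into odd parts $\leq 2n-1$. Merging $\pi_1,\pi_2,\pi_3$ by adding frequencies yields a partition $\pi$ in which every even integer $\leq 2n-2$ occurs (from $\pi_2$), no even part exceeding $n$ is repeated (since $\pi_1$ has parts $\leq n$ and $\pi_3$ only odd parts), every part is $\leq 2n-1$, and $f_n(\pi)\geq 1+\chi(n\text{ even})$. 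These are precisely conditions (i)--(iv) of $\R$, and $n$ is recoverable from $\pi$ as one plus half its largest even part.

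The heart of the argument is then a weight count: for a fixed $\pi\in\R$ with $n=n(\pi)$, I would sum $2^{\nu_d(\pi_1)}$ over all triples $(\pi_1,\pi_2,\pi_3)$ merging to $\pi$. A part-by-part analysis gives the following factors. For each even part $2j$ with $2j\leq n$, the forced decomposition $f_{2j}(\pi_1)=f_{2j}(\pi)-1$ contributes a factor of $2$ precisely when $f_{2j}(\pi)\geq 2$, yielding $2^{\delta(\pi)}$ in total. Even parts $2j$ with $n<2j\leq 2n-2$ cannot lie in $\pi_1$, so $f_{2j}(\pi)=1$ is forced and the contribution is trivial. Each odd part $2j+1<n$ admits $f_{2j+1}(\pi)+1$ decompositions $f_{2j+1}(\pi)=f_{2j+1}(\pi_1)+f_{2j+1}(\pi_3)$, one of weight $1$ and $f_{2j+1}(\pi)$ of weight $2$, giving $2f_{2j+1}(\pi)+1$; odd parts exceeding $n$ come only from $\pi_3$ and contribute trivially. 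For the part $n$ itself, the case $n$ even is already absorbed into $2^{\delta(\pi)}$, while if $n$ is odd the constraint $f_n(\pi_1)\geq 1$ produces a factor of $2f_n(\pi)$. Multiplying these factors reproduces $\mu(\pi)=2^{\delta(\pi)}\gamma(\pi)$ exactly.

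The main obstacle will be the bookkeeping in this final step, particularly aligning the sub-cases for the distinguished part $n$ (even versus odd) with the two summands of $\gamma$, and verifying that the merging map is a well-defined correspondence between triples and $\R$ so that $n=n(\pi)$ is unambiguously determined by $\pi$ alone and no triple is missed or double-counted.
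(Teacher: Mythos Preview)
Your proposal is correct and follows essentially the same approach as the paper: both derive Theorem~\ref{Last_theorem1} from Ramanujan's identity~\eqref{Ramanujan_EQN} by interpreting the product side via \eqref{over_connect_regular_abstract} and the sum side via the triple decomposition $(\pi_1,\pi_2,\pi_3)$, then merging and performing a part-by-part weight count to obtain $\mu(\pi)$. Your write-up is in fact somewhat more explicit than the paper's in carrying out the case analysis that yields the factors $2^{\delta(\pi)}$ and $\gamma(\pi)$.
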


One example of Theorem~\ref{Last_theorem1} will be given in Table~\ref{Table_last_theorem}.

In Ramanujan's entry \cite[E. 4.2.9, p. 86]{LostNotebook_2}, \begin{equation}\label{Ramanujan_EQN2} \frac{(-q;q^3)_\infty(-q^2;q^3)_\infty}{(q;q^3)_\infty(q^2;q^3)_\infty} =\sum_{n\geq 0} \frac{q^{n^2}(-q;q)_{n}}{(q;q)_{n}(q;q^2)_{n+1}}\end{equation} we see the same product of \eqref{Ramanujan_EQN}. The sum on the right-hand side of \eqref{Ramanujan_EQN2} can also be interpreted as a weighted partition count for a special subset of partitions. This is rather analogous to $\R$. Let the set $\Q$ be the set of partitions $\pi$, where
\begin{enumerate}[i.]
\item the largest part is $=2{n}-1$ for some integer $n> 0$,
\item all odd integers $\leq 2n-1$ appear as a part,
\item and no even parts $>n$ appear. 
\end{enumerate} Clearly here
\[n := \frac{\text{largest part of }\pi+1}{2}.\]
A similar weight to $\mu$ can be defined on $\Q$ as follows
\begin{equation}
\eta(\pi) := 2^{\nu_{d,e}(\pi)} (\chi(n\text{ is even})\cdot (1+\chi(f_{n}=0)\,)+2\cdot f_{n}\cdot\chi(n\text{ is odd})\,) \prod_{2j+1<n} (2f_{2j+1}-1)
\end{equation} where $\nu_{d,e}(\pi)$ is the number of different even parts of $\pi$. Hence, we have the identity

\begin{theorem}\label{Last_theorem}
\begin{equation}
\sum_{\pi\in\C} 2^{\nu_d(\pi)}q^{|\pi|} = \sum_{\pi\in\R} \mu(\pi) q^{|\pi|} = \sum_{\pi\in\Q} \eta(\pi) q^{|\pi|}.
\end{equation}
\end{theorem}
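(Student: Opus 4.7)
The first equality in Theorem~\ref{Last_theorem} is precisely Theorem~\ref{Last_theorem1}, so the task reduces to proving
\[
\sum_{\pi\in\C} 2^{\nu_d(\pi)} q^{|\pi|} = \sum_{\pi\in\Q} \eta(\pi) q^{|\pi|}.
\]
The plan is to invoke Ramanujan's identity \eqref{Ramanujan_EQN2}. Its left side is the generating function for overpartitions with no part divisible by $3$, which by \eqref{over_connect_regular_abstract} equals $1+\sum_{\pi\in\C} 2^{\nu_d(\pi)}q^{|\pi|}$. It therefore suffices to interpret the right side of \eqref{Ramanujan_EQN2} as $1+\sum_{\pi\in\Q}\eta(\pi)q^{|\pi|}$, with the leading $1$ accounting for the empty partition.

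For the $m$-th summand I would parse $\frac{q^{m^2}(-q;q)_m}{(q;q)_m (q;q^2)_{m+1}}$ as the joint generating function of a triple: a skeleton $\pi_0=(1,3,\ldots,2m-1)$ (contributing $q^{m^2}$); a partition $\pi_1$ with parts $\leq m$ weighted by $2^{\nu_d(\pi_1)}$, via \eqref{over_connect_regular_abstract} applied to $(-q;q)_m/(q;q)_m$; and a partition $\pi_2$ into odd parts $\leq 2m+1$. Merging by frequency-addition yields a partition $\pi$ in which every odd integer up to $2m-1$ appears, every odd part is at most $2m+1$, and every even part is at most $m$. The largest part of $\pi$ is necessarily odd: it equals $2m-1$ when $f^{\pi_2}_{2m+1}=0$ (Case A, giving $\pi\in\Q$ with $n=m$), and $2m+1$ otherwise (Case B, $n=m+1$). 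Consequently, a fixed $\pi\in\Q$ with largest part $2n-1$ is obtained from Case A with $m=n$ always, and additionally from Case B with $m=n-1$ precisely when the even parts of $\pi$ do not exceed $n-1$, namely whenever $n$ is odd or $n$ is even with $f^\pi_n=0$.

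The substance of the proof is the weight aggregation per $\pi\in\Q$. For each odd part $2j+1<n$, the $f^\pi_{2j+1}-1$ copies remaining after the skeleton split freely between $\pi_1$ and $\pi_2$, contributing $\sum_{a=0}^{f^\pi_{2j+1}-1} 2^{\chi(a\geq 1)} = 2f^\pi_{2j+1}-1$, matching the corresponding factor in $\eta$. Even parts come only from $\pi_1$ and contribute $2^{\nu_{d,e}(\pi)}$ overall. Odd parts strictly between $n$ and $2n-1$ have uniquely determined splits of weight $1$.

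The main obstacle is the delicate bookkeeping for the part $n$ itself. When $n$ is odd, Case A supplies the factor $2f^\pi_n-1$ from the splits of its non-skeleton copies, while Case B supplies the factor $1$ (all those copies must come from $\pi_2$); the sum $2f^\pi_n$ reproduces the term $2 f_n \chi(n\text{ odd})$ in $\eta$. When $n$ is even, part $n$ can only appear through $\pi_1$, and only in Case A: if $f^\pi_n\geq 1$, the resulting multiplier $2$ is already absorbed into $2^{\nu_{d,e}(\pi)}$ and only Case A contributes, whereas if $f^\pi_n=0$, both cases contribute trivially and produce the extra factor $1+\chi(f_n=0)=2$ appearing in $\eta$. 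Assembling every contribution reproduces $\eta(\pi)$ exactly, completing the argument.
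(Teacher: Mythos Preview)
Your proposal is correct and follows exactly the approach the paper intends: the paper proves the first equality in detail via the combinatorial reading of \eqref{Ramanujan_EQN}, then for the second equality simply invokes \eqref{Ramanujan_EQN2} and remarks that its right side ``can also be interpreted as a weighted partition count \ldots\ rather analogous to $\R$,'' leaving the weight bookkeeping implicit. Your argument supplies precisely those omitted details---the Case~A/Case~B split according to whether the part $2m+1$ is used, and the part-by-part accounting that assembles $\eta(\pi)$---so it is the same proof, only written out more fully than in the paper.
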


The example of this result is included in Table~\ref{Table_last_theorem}. From that table, it appears that there exists a weight, norm, and $n$-value preserving bijection from $\R$ to $\Q$. We would like to leave the discovery of this bijection for a motivated reader.

\begin{table}[htb]\caption{Example of Theorem~\ref{Last_theorem} with $|\pi|=7$.}\label{Table_last_theorem}
\begin{center}\vspace{-.5cm}
\[\begin{array}{ccc||ccc||ccc}
&\pi\in\C  & 2^{\nu_d(\pi)} &  \pi\in\R		&	n	&  \mu(\pi) & \pi\in\Q		&	n	&  \eta(\pi)\\[-2ex]& & && \\
&(1,2,4) 	& 2^3		&   (1^7) 			& 	1	&	14		& (1^7)			&	1	&	14\\
&(2,5)		& 2^2		&   (1^3,2^2)		& 	2	&	14		& (1^4,3)		&	2	&	14\\
&(1^2,5)	& 2^2		&   (1,2^3) 		&	2	&	6		& (1^2,2,3)		&	2	&	6\\
&(1^3,4) 	& 2^2		&   (2^2,3)	 		&	2	&	2		&(1,3^2)		&	2	&	2\\
&(1^5,2) 	& 2^2		&     				&		&			&&&\\
&(1^3,2^2)	&2^2		&					& 		&			&&&\\
&(1,2^3) 	&2^2 		&    				& 		&			&&&\\
&(7)		&2			&					&		&		&&&\\
&(1^7)		&2			&					&		&			&&&\\\hline
\text{Total:}&		&  36 & & &36&&&36\\
\end{array}\]
\end{center}
\end{table}

\section{Acknowledgement}
Authors would like to thank George Andrews for his kind interest, and Jeramiah Hocutt for his careful reading of the manuscript.

\end{document}